\title[Multipliers of cycles of cubic polynomials]{Cubic polynomials with periodic cycles of a specified multiplier}
\author{Patrick Ingram}
\address{Department of Pure Mathematics, University of Waterloo \emph{Current address:} Department of Mathematics, Colorado State University}
\email{pingram@math.uwaterloo.ca \emph{Current e-mail:} pingram@math.colostate.edu}
\thanks{This research was supported in part by a Discovery Grant from NSERC of Canada.}
\date{\today}
\newcommand{\QQ}{\mathbb{Q}}
\newcommand{\ZZ}{\mathbb{Z}}
\newcommand{\CC}{\mathbb{C}}
\newcommand{\PP}{\mathbb{P}}
\renewcommand{\AA}{\mathbb{A}}
\newcommand{\Ocal}{\mathcal{O}}
\newcommand{\Ecal}{\mathcal{E}}
\newcommand{\Pcal}{\mathcal{P}}
\newcommand{\Gal}{\operatorname{Gal}}
\newcommand{\Spec}{\operatorname{Spec}}
\newcommand{\PGL}{\operatorname{PGL}}
\newcommand{\PSL}{\operatorname{PSL}}
\newcommand{\I}{\textup{I}}
\newcommand{\III}{\textup{III}}
\newcommand{\NS}{\operatorname{NS}}
\newcommand{\rank}{\operatorname{rank}}
\newcommand{\XEll}{X^{\mathrm{Ell}}}
\newtheorem{theorem}{Theorem}
\newtheorem{proposition}[theorem]{Proposition}
\newtheorem{lemma}[theorem]{Lemma}
\theoremstyle{remark}
\newtheorem{remark}{Remark}
\newtheorem{question}{Question}
\begin{document}
\begin{abstract}
We consider cubic polynomials $f(z)=z^3+az+b$ defined over $\CC(\lambda)$, with a marked point of period $N$ and multiplier $\lambda$.  In the case $N=1$, there are infinitely many such objects, and in the case $N\geq 3$, only finitely many (subject to a mild assumption).  The case $N=2$ has particularly rich structure, and we are able to describe all such cubic polynomials defined over the field $\bigcup_{n\geq 1}\CC(\lambda^{1/n})$.
\end{abstract}

\maketitle

\section{Introduction}

Let $\widehat{\CC}$ denote the Riemann sphere, and let $f:\widehat{\CC}\rightarrow\widehat{\CC}$ be a holomorphic function.  If one is interested in studying the dynamics of $f$, one natural starting point is to describe the periodic points under $f$.
  The point $\alpha\in\widehat{\CC}$ is said to be a point of \emph{period dividing} $N$ for $f$ if \[f^N(\alpha)=f\circ f\circ\cdots\circ f(\alpha)=\alpha,\]
and a point of \emph{(exact) period} $N$ if $N$ is the least positive integer for which the above relation holds.    If we expand $f^N(z)-\alpha$ as a power series near $z=\alpha$,
\[f^N(z)-\alpha=\lambda(z-\alpha)+c_2(z-\alpha)^2+\cdots,\]
then the coefficient $\lambda$, called the \emph{multiplier} of the periodic point $\alpha$, determines much of the dynamics near the cycle.
  We say that the cycle is
\emph{repelling} if $|\lambda|>1$,
\emph{attracting} if $|\lambda|<1$,
and 
\emph{indifferent} if  $|\lambda|=1$.
This distinction turns out to be fairly important in the classification of the dynamics of holomorphic functions; for example, a classical result in holomorphic dynamics (see \cite[Theorem~14.1]{milnor}) states that the Julia set of a function is exactly the closure its set of repelling periodic points.

The aim of this paper is to make a few observations about periodic points of cubic polynomials.  Cubic polynomials with marked points of period $N$ are parametrized by a 2-dimensional moduli space $\mathcal{P}_{3}(N)$ (defined more precisely below).  The map $\lambda:\mathcal{P}_3(N)\rightarrow \widehat{\CC}$ taking a cycle of a polynomial to its multiplier offers a natural fibration of these spaces.  The fibres of the multiplier map turn out to be of intrinsic interest, arising in the the classification of the hyperbolic components of the connectedness locus of the moduli space of cubic maps (see, for example, \cite{milnor_cubics}).

More specifically, we are interested in describing sections, and certain multi-sections, of these fibrations.
By a \emph{multiplier section of period $N$} we mean a triple of holomorphic functions $a, b, z_1:\widehat{\CC}\rightarrow \widehat{\CC}$ in the variable $\lambda$, such that $z_1$ is a point of period $N$ 
for the map $f(z)= z^3+az+b$, with multiplier $\lambda$.
For example, the cubic map
\[f(z)=z^3+\lambda z\]
has a fixed point $z_1=0$, with multiplier $\lambda$.
  More generally, for a morphism $\lambda:X\rightarrow \widehat{\CC}$ of compact Riemann surfaces, one might ask about meromorphic triples $a, b, z_1:X\rightarrow\widehat{\CC}$ of the same sort, again with multiplier $\lambda$.  One particularly natural case of this is triples $a, b, z_1:\widehat{\CC}\rightarrow\widehat{\CC}$ in the variable $w$, defining an $N$-cycle of multiplier $\lambda=w^m$, since the distinction between attracting, repelling, and indifferent cycles is defined identically in terms of $|w|$ as in terms of $|\lambda|$. Triples of this form will be called
\emph{$m$th-root multiplier sections of period $N$}, and these will be called \emph{primitive} unless they factor non-trivially through an intermediate map $\lambda=u^d$.
An example of a square-root multiplier section of period $2$ is the given by the cubic polynomial
\[f(z)=z^3+\frac{1}{6}(w^2-9)z+\frac{\sqrt{-2}}{54}(w^2-9)w,\]
which has a point of period 2 at
\[z_1=\frac{-\sqrt{-2}}{6}(w+3i)\]
with multiplier $\lambda=w^2$. 

Our first theorem is fairly elementary, but is presented for contrast with Theorems~\ref{th:N=2} and \ref{th:N>3}.

\begin{theorem}
There exist infinitely many multiplier sections of period 1.
\end{theorem}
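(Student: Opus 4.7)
The plan is to explicitly parametrize all period-$1$ multiplier sections, and then observe that the parametrizing space is itself infinite-dimensional.

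First I would write down the defining equations. A triple $(a,b,z_1)$ is a multiplier section of period $1$ precisely when
\[
z_1^3 + a z_1 + b = z_1 \qquad \text{and} \qquad 3 z_1^2 + a = \lambda,
\]
these being the fixed-point equation and the multiplier equation respectively. The second equation is linear in $a$, and once $a$ is determined the first equation is linear in $b$. Solving,
\[
a = \lambda - 3 z_1^2, \qquad b = (1-\lambda) z_1 + 2 z_1^3.
\]

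The key observation is now that $z_1$ is completely unconstrained: for \emph{any} meromorphic function $z_1 : \widehat{\CC}\rightarrow \widehat{\CC}$ in the variable $\lambda$, the triple $(a,b,z_1)$ defined by the formulas above is automatically a multiplier section of period $1$. Conversely, distinct choices of $z_1$ yield distinct triples. Thus the set of multiplier sections of period $1$ is parametrized bijectively by the (infinite) set of all rational functions in $\lambda$.

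To conclude, I would exhibit an explicit infinite family, for instance $z_1 = c$ with $c\in\CC$ arbitrary, giving the sections
\[
f_c(z) = z^3 + (\lambda - 3c^2) z + \bigl((1-\lambda) c + 2 c^3\bigr),
\]
which are pairwise distinct as $c$ varies (since $z_1$ is recoverable from the triple). There is no real obstacle here; the point of the argument is to display the contrast with the period $N\geq 2$ cases, where the fixed-point equation $f^N(z_1)=z_1$ is of much higher degree in $z_1$ and forces algebraic constraints between $a$, $b$, and $\lambda$ that eliminate the free parameter.
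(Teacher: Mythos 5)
Your proof is correct, and it ends up with exactly the parametrization the paper finds (the paper's variable $s$ is just $-z_1/3$). The route is genuinely more elementary, though: you observe directly that the multiplier equation is linear in $a$ and the fixed-point equation is then linear in $b$, so $z_1$ is a free parameter and the solution set is visibly a $\PP^1$ over $\CC(\lambda)$ with coordinate $z_1$. The paper instead eliminates $z$ by a resultant, obtains the nodal cubic
\[
27b^2 + (a-\lambda)(2a-3+\lambda)^2 = 0
\]
in the $(a,b)$-plane, and then blows up the node to recover the rational parametrization. The paper's detour is not wasted: it exhibits the projection $X_1(1)\rightarrow \{(a,b)\}$ and its geometry (a nodal cubic, whose normalization is the smooth model), which is the same style of analysis carried out for $X_0(2)$ in the next section where the resultant curve is genuinely nontrivial. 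Your shortcut works here precisely because for $N=1$ the periodic-point polynomial is linear in $b$, an accident that fails immediately for $N\geq 2$; you correctly flag this as the source of the contrast with the higher-period cases. One small caveat worth keeping in mind: what you have shown is that every solution of the two defining equations is a multiplier section, which is the full content in the $N=1$ case since there is no lower period to exclude; for $N\geq 2$ one must additionally pass to the factor $\Phi_N$, and the analogous unconstrained parametrization argument would not survive that step.
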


We can, in fact, simply write down an explicit parametrization of all such sections.
The case $N=2$ turns out to be much more interesting.  Not only are there no multiplier sections in this case, but the full set of $m$th-root multiplier sections can be described fairly explicitly.

\begin{theorem}\label{th:N=2}
If $(a, b, z_1)$ is a primitive $m$th-root section of period 2, then $m$ divides 12.  Furthermore, the set of triples of this form is infinite, but has the structure of a finitely generated abelian group of rank 3.  Finally, there are no multiplier sections  (that is, $m$th root sections with $m=1$) of period 2.
\end{theorem}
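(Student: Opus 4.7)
The plan is to realize the locus of triples $(a, b, z_1)$ with $z_1$ a $2$-cycle of $f(z) = z^3 + az + b$ as an elliptic fibration over the $\lambda$-line, and then to apply Mordell--Weil theory over function fields. Setting $z_2 = f(z_1)$, $s = z_1 + z_2$, $p = z_1 z_2$, the condition that $\{z_1, z_2\}$ be a $2$-cycle is symmetric and equivalent to the identities $f(z_1) + f(z_2) = s$ and $f(z_1) f(z_2) = p$. The first is linear in $b$, so on the open stratum $s \neq 0$ I can eliminate $b$ and substitute into the second, obtaining a single polynomial relation $\Phi(s, p, a) = 0$ cutting out a surface $V \subset \AA^3_{s,p,a}$. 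A short computation gives the multiplier in the form
\[\lambda = (3p - a)^2 + 3as^2.\]
The degenerate strata (namely $s = 0$, and the fixed-point locus $s^2 = 4p$) are to be discarded and analysed separately.

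Next I would compactify $V$ and realize it, with projection to $\mathbb{P}^1_\lambda$ and a chosen zero section, as an elliptic surface $\Ecal \to \mathbb{P}^1_\lambda$. The generic fiber gives an elliptic curve $E$ over $\CC(\lambda)$, and computing its discriminant together with Tate's algorithm identifies the Kodaira types of its singular fibers. Multiplier sections of period $2$ are then $\CC(\lambda)$-points of $E$ lying outside the degenerate locus; to establish the final assertion of the theorem I would compute $E(\CC(\lambda))$ (expected to have rank $0$, with only small torsion) and verify that each of its torsion points corresponds to one of the excluded degenerations, precluding any genuine multiplier section.

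For the primitive $m$th-root statement and the group structure, pull back $\Ecal$ along $\lambda = w^m$ to obtain an elliptic surface $\Ecal_m \to \mathbb{P}^1_w$. New $\CC(w)$-points can appear only when this cover partially resolves additive fibers of $\Ecal$, so the admissible $m$ are constrained by the Kodaira exponents of those fibers: types $I_n^*$, $IV/IV^*$, $III/III^*$, $II/II^*$ contribute $2$, $3$, $4$, $6$ respectively, and their least common multiple gives the bound $m \mid 12$. All such sections lift into $\Ecal_{12}(\CC(u))$ (with $\lambda = u^{12}$), so the set of primitive $m$th-root multiplier sections, together with the identity, inherits the structure of a finitely generated abelian group. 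A Shioda--Tate count (Picard number of $\Ecal_{12}$, minus $2$, minus $\sum_v (m_v - 1)$ over singular fibers) then yields the claimed rank $3$.

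The hardest part of the argument will be the singular-fiber bookkeeping: identifying the Kodaira types of $\Ecal$, tracking them through the ramified base change $\lambda = w^{12}$, and executing the Shioda--Tate rank computation on $\Ecal_{12}$. A subtler but essential point is disentangling genuine period-$2$ sections from torsion points that secretly parametrize fixed points of $f$ or the $s = 0$ and $s^2 = 4p$ degenerations.
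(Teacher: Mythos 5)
Your global strategy---realize the period-$2$ locus as an elliptic surface over the $\lambda$-line and invoke Mordell--Weil theory over function fields---is exactly the paper's, and the plan to show $E(\CC(\lambda))$ has rank $0$ with only the ``degenerate'' $2$-torsion is sound. The gap is in how you control the rank over the base extensions $\lambda=w^m$. Your claim that new $\CC(w)$-sections can appear only when the cover partially resolves an additive fiber, so that the admissible $m$ are controlled by Kodaira exponents, is not correct as a matter of geometry: rank jumps under cyclic base change are governed by cohomological/arithmetic data, not merely by changes in fiber type. And even if it were true, the heuristic would not give $12$ here. The surface $\Ecal$ has exactly one additive fiber, of type $\III^*$ at $\lambda=\infty$, whose ``exponent'' is $4$; taking the least common multiple of \emph{all possible} Kodaira exponents ($2,3,4,6$) is irrelevant to this particular surface. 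The paper instead needs a genuine theorem of Fastenberg on ranks of elliptic curves over cyclic extensions $\CC(\lambda^{1/n})$, which (after an improvement using Shioda--Tate to kill the trivial eigenspace contribution) yields $\operatorname{rank}\le 0,1,2,3$ according as $\gcd(n,6)=1,2,3,6$.

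The Shioda--Tate count you propose on $\Ecal_{12}$ also runs into trouble: pulling back by $\lambda=w^{12}$ turns $\III^*$ into good reduction at $\infty$, leaving an $\I_{12}$ fiber at $0$ and twelve $\I_2$ fibers, so $e(\Ecal_{12})=36$ and the surface is neither rational nor K3 ($\chi=3$). The naive bound $\rho\le h^{1,1}$ then gives only $\operatorname{rank}\le 5$, not $3$. The paper avoids this by computing Shioda--Tate on the K3 surface $\Ecal_6$ (where $\rho\le 20$ pins down $\operatorname{rank}\le 3$) and using the Fastenberg eigenspace decomposition to propagate the bound to all $\Ecal_{6n}$. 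Finally, you should be aware that the a priori finite generation of $E(K_\infty)$, where $K_\infty=\bigcup_n\CC(\lambda^{1/n})$, is itself nontrivial---the paper points out that over the formal completion $\CC((\lambda))$ the group is far from finitely generated---so the statement that all sections live in a finitely generated group requires proof, not just assembly of the rank bound: the paper needs the explicit generators $P,R_1,R_2$ over $K_{12}$, height-pairing index computations, and a descent argument to establish $E(K_\infty)=E(K_{12})$.
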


While there are infinitely many $12$th-root multiplier sections of period 2, it turns out that another sort of finiteness holds (beyond the finite generation of the group of sections): given any finite set of points $S\subseteq\widehat{\CC}$, only finitely many of these sections have no poles outside of $S$.

In the case where $N\geq 3$, it turns out that there are only finitely many multiplier sections in total (for each $N$), assuming the above-mentioned fibration is not isotrivial.  Recall that a fibred suface is \emph{isotrivial} if all smooth fibres are isomorphic.  The fibration in question is non-isotrivial in the cases  $N=1, 2$ and $3$, and we suspect this to be true for all $N$.

\begin{theorem}\label{th:N>3}
Let $N\geq 3$, and suppose that the fibration of $\mathcal{P}_3(N)$ by $\lambda$ is not isotrivial.  Then there exist only finitely many multiplier sections of period $N$.  More generally, given any compact Riemann surface $X$ and  holomorphic map $\lambda:X\rightarrow\widehat{\CC}$, there exist only finitely many triples $a, b, z_1:X\rightarrow \widehat{\CC}$ as above with period $N$ such that the resulting cycle has multiplier $\lambda$.
\end{theorem}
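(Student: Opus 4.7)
\bigskip

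\noindent\textbf{Plan of proof.} The plan is to interpret multiplier triples as sections of a fibration of algebraic surfaces and to invoke the function-field version of Mordell's conjecture. Realize $\mathcal{P}_3(N)$ concretely as the affine surface in $\AA^3$ (with coordinates $a,b,z_1$) cut out by the primitive $N$th dynatomic polynomial $\Phi_N(z_1;a,b)$ of $f(z)=z^3+az+b$; the multiplier $\lambda=\prod_{i=0}^{N-1}f'(f^i(z_1))$ is then a regular function $\lambda:\mathcal{P}_3(N)\to\widehat{\CC}$. Passing to a suitable smooth projective compactification $\overline{\mathcal{P}_3(N)}$ and a relatively minimal model of the resulting fibration, a triple $a,b,z_1:X\to\widehat{\CC}$ of period $N$ with cycle-multiplier equal to a given non-constant $\lambda:X\to\widehat{\CC}$ is precisely a section of the base-changed fibration $\overline{\mathcal{P}_3(N)}\times_{\widehat{\CC}}X\to X$; such a section is necessarily non-constant, since a constant section would have constant multiplier.

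By the function-field analogue of Mordell's conjecture (Manin, Grauert, with de Franchis--Severi covering the isotrivial case), any fibration of a surface over a curve whose smooth generic fiber has genus at least $2$ admits only finitely many non-constant sections. Since the genus of the generic fiber is invariant under the base change along $\lambda$, the theorem reduces to the following geometric claim: the generic fiber $C_N/\CC(\lambda)$ of the multiplier map $\lambda:\mathcal{P}_3(N)\to\widehat{\CC}$ has geometric genus at least $2$ for every $N\geq 3$.

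This genus bound is the main obstacle. Theorem~\ref{th:N=2} strongly suggests that $C_2$ is a curve of genus $1$ (it carries a finitely generated group of sections of positive rank, consistent with a Mordell--Weil group on an elliptic surface); for $N\geq 3$ the rapid growth in the degree of $\Phi_N$ and the richer ramification pattern of the cubic $f$ should force $g(C_N)\geq 2$. For $N=3$, I would project $C_3$ onto the affine $z_1$-line, read off the ramification data from the equations $\Phi_3(z_1;a,b)=0$ and $\prod_{i=0}^{2}f'(f^i(z_1))=\lambda$, and apply Riemann--Hurwitz to bound the genus from below. For $N\geq 4$ an analogous but more elaborate Riemann--Hurwitz calculation ought to suffice; alternatively, one can exploit natural finite correspondences among the $\mathcal{P}_3(N)$'s arising from iteration of $f$ to bootstrap the genus bound from small values of $N$. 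A subsidiary concern is to work throughout with the correct smooth projective model of $C_N$ and to account for degenerate or reducible fibers before invoking the Mordell-type finiteness.
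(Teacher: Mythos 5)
Your strategic outline is exactly the paper's: reduce to showing that the generic fibre $C_N=X_1(N)$ of the multiplier map is a curve of genus at least $2$ over $\CC(\lambda)$, and then invoke the function-field Mordell conjecture (Manin/Grauert, and de Franchis for the isotrivial case). That reduction is correct, and you also correctly note the need to pass to a smooth projective model and to verify that the sections in question are non-constant. But the genus bound \emph{is} the theorem; everything else is routine, and you have not proved it. You write that ``the rapid growth in the degree of $\Phi_N$ \ldots should force $g(C_N)\geq 2$'' and that a Riemann--Hurwitz calculation ``ought to suffice,'' but no actual ramification data is computed and no bound is established. Moreover, you have not addressed the geometric irreducibility of $C_N$, without which a genus bound is meaningless (the paper establishes this via Morton's irreducibility theorem for dynatomic curves together with the exhibition of a $\CC(\lambda)$-rational point on a smooth affine model).

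As for the method you sketch: projecting $C_3$ onto the $z_1$-line and reading off ramification is plausible in principle, but the paper explicitly remarks that ``computing the genus of $X_1(3)$ directly already poses a significant computational challenge,'' and instead bounds $g(X_1(3))\geq 5$ by specializing $\lambda=1$ and finding a reducible specialized fibre with a component of genus $5$. For general $N$, the paper does not project to a coordinate line; it works with the alternative model $X_1'(N)$ (where $a=-3u^2$, $b=2v^3$), identifies explicit order-two and order-six automorphisms $\tau$ and $\sigma$, locates enough non-singular fixed points of $\sigma^3\tau$ on the boundary $s=0$ to guarantee at least $N$ ramification points of index $2$ for $X_1(N)\rightarrow P_1(N)$, and applies Riemann--Hurwitz to the quotient to get $2g(X_1(N))-2\geq N-4$. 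A separate, more refined count (working over $b=0$) handles even $N$, in particular $N=4$. None of this — the choice of model, the automorphisms, the identification and counting of fixed points, the verification that they are non-singular — appears in your proposal, and I see no indication that your alternative approach (direct Riemann--Hurwitz on the $z_1$-projection, or the vague ``bootstrap'' via correspondences among the $\mathcal{P}_3(N)$) would be any easier to execute. So while the frame is right, the core geometric input is missing.
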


Both of these results lead us to ask the following question:
\begin{question}
Do there exist \emph{any} multiplier sections of period $N\geq 3$?  What about $m$th-root multisections, where $m$ is arbitrary?
\end{question}

As we are interested in sections of the fibred surfaces $\lambda:\mathcal{P}_3(N)\to\widehat{\CC}$, it is reasonable to consider the generic fibres, on which these sections correspond to points.  Specifically, the function field of $\mathcal{P}_3(N)$ has transcendance rank 1 over the field $\CC(\lambda)$, and hence is the function field of some smooth, projective, algebraic curve $X_1(N)$ over this field.  Sections of the fibration correspond to $\CC(\lambda)$-rational points on $X_1(N)$, and so tools from arithmetic geometry may be brought to bear; it is this connection which we use to prove the theorems above.  It is worth noting that our results also show, for example, that if $N\geq 3$, and $K$ is a number field, then for all but finitely many $\lambda\in K$ (that is, all but those below ``bad fibres'' of the moduli space), there exist at most finitely many cubic polynomials $z^3+az+b$ with $a, b\in K$ and a marked $K$-rational point of period $N$.  
Since points on $X_1(N)$ correspond to cubic polynomials with a marked point of period $N$, there is a natural action of $\ZZ/N\ZZ$ on this curve which sends $(f, P)$ to $(f, f(P))$.  The quotient by this group of automorphisms will be denoted by $X_0(N)$, a notation intended to be evocative of the analogous moduli problem in the study of elliptic curves.

Note that, while the fibrations above admit few sections, the underlying surfaces in some cases are fairly simple.  For example, in the case $N=2$, the multiplier fibration is elliptic (that is, the generic fibre $X_1(2)$ is an elliptic curve over $\CC(\lambda)$), but the underlying space is rational.  This means, for example, that if $K$ is a number field, then cubic polynomials $z^3+az+b$ with $K$-rational coefficients,  and a $K$-rational point of period 2, are relatively common, while such pairs with a specified multiplier are relatively sparse.  It turns out that the moduli spaces of polynomials with  marked points of relatively small period is always rational.  

\begin{theorem}\label{th:rational}
Fix and integer $d\geq 2$, and natural numbers $N_1, ..., N_s$ with 
\[N_1+N_2+\cdots+N_s\leq d+1.\]   Then the fibre product of the spaces $\mathcal{P}_d(N_1), \ldots, \mathcal{P}_d(N_s)$, over the moduli space of polynomials of degree $d$, that is, the moduli space parametrizing polynomials of degree $d$ with marked points of period $N_1, ..., N_s$, is rational.
\end{theorem}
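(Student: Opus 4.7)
The plan is to parametrize the fibre product by orbit points. Set $N=N_1+\cdots+N_s$ and fix a permutation $\sigma\in S_N$ whose cycle type is $(N_1,\ldots,N_s)$; the moduli can then be identified with the variety of tuples $(f,w_1,\ldots,w_N)$ in which $f$ is a monic polynomial of degree $d$ satisfying $f(w_i)=w_{\sigma(i)}$ for all $i$, modulo the translation action $w_i\mapsto w_i+c$, $f(z)\mapsto f(z-c)+c$. Working initially without imposing $a_{d-1}=0$, the $N$ equations $f(w_i)=w_{\sigma(i)}$ form a linear system in the $d$ remaining coefficients of $f$, whose coefficient matrix is the $N\times d$ Vandermonde matrix of $w_1,\ldots,w_N$.

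When $N\le d$, this matrix has full row rank for distinct $w_i$, so for each orbit configuration there is an affine $\AA^{d-N}$-family of admissible polynomials $f$. Projection onto the orbit coordinates therefore exhibits the moduli (before modding out translation) as an $\AA^{d-N}$-fibration over an open subset of $\AA^N$, which is birational to $\AA^d$; passing to the translation quotient yields $\AA^{d-1}$, settling every case with $N\le d$.

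The delicate case is $N=d+1$. Here the Vandermonde matrix has full column rank, and existence of a monic $f$ imposes the single polynomial constraint
\[
L_d(w_\bullet)\;=\;\sum_{i=1}^{d+1}\frac{w_{\sigma(i)}}{\prod_{j\ne i}(w_i-w_j)}\;=\;1
\]
on $w_\bullet$. The classical identity $\sum_i\prod_{j\ne i}(w_i-w_j)^{-1}=0$ makes $L_d$ translation-invariant, so after quotienting the moduli appears as a hypersurface in $\AA^d$; showing \emph{this} hypersurface is rational is the main obstacle. I would attempt this by projecting from a natural singular point, the origin $w_1=\cdots=w_{d+1}=0$ being the obvious candidate: in the smallest nontrivial case $d=2$, $N=3$, direct calculation reveals that the hypersurface is a plane cubic with a node at the origin and so is rational by projection, and one expects analogous multiplicity along the big diagonal in general. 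A complementary inductive route, likely needed for a uniform proof across cycle types, is available whenever some $N_j=1$: the fibre-product structure then lets one peel off a marked fixed point and reduce either to the already-treated range $N\le d$ or to the residual subcase in which every $N_j\ge 2$.
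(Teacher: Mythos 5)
Your setup contains a flaw that then forces you into the boundary case you cannot finish. By normalizing $f$ to be monic, you have not fully used up the affine conjugation: the stabilizer of the monic locus is not just translations but translations together with $z\mapsto\zeta z$ for $\zeta^{d-1}=1$, so "monic polynomials modulo translation" is a $\mu_{d-1}$-cover of $\Pcal_d(N_1,\ldots,N_s)$, not the moduli space itself. More importantly, fixing monicity reduces your coefficient space to dimension $d$, which is exactly why the case $N=d+1$ becomes overdetermined and you are left staring at the hypersurface $L_d(w_\bullet)=1$ with no general rationality argument — a computation in the case $d=2$ and a hope about multiplicity along the diagonal is not a proof, and the suggested induction on peeling off a fixed point only applies when some $N_j=1$.

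The paper sidesteps all of this by \emph{not} normalizing the polynomial at all: it works with the full coefficient vector $(a_0,\ldots,a_d)\in\AA^{d+1}$ and quotients by the entire $2$-dimensional group of affine substitutions $z\mapsto\alpha z+\beta$. The $M$ equations $f(z_i)=z_{\sigma(i)}$, together with the $d+1-M$ trivial equations $a_j=a_j$ for $j\geq M$, give a $(d+1)\times(d+1)$ linear system in $(a_0,\ldots,a_d)$ whose matrix is invertible precisely when the $z_i$ are distinct. Thus the variety is birational to the $(d+1)$-dimensional space with coordinates $z_1,\ldots,z_M,a_M,\ldots,a_d$, and the affine group acts with every orbit (for $M\geq 2$) meeting the slice $z_1=0$, $z_2=1$ exactly once, giving $\AA^{d-1}$. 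The crucial point is that allowing the leading coefficient to vary makes the linear algebra work \emph{uniformly} up to $M=d+1$; there is never an extra constraint on the orbit points and no separate hypersurface case to handle. To repair your argument you would essentially need to import this idea: drop the monic normalization, carry the extra coefficient, and quotient by the full affine group rather than translations alone.
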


  This prompts some obvious questions:
\begin{question}
Is it true that $\mathcal{P}_d(N)$ is rational \emph{only if} $N\leq d+1$?  Is it true that there is some $M=M(d)$ such that $N\geq M(d)$ implies that $\mathcal{P}_d(N)$ is a variety of general type, and if so, what is the least such $M$ for each $d$?
\end{question}

The paper is organized as follows.  In Section~1, we define the moduli spaces under discussion formally, and establish some of their basic properties.  Although the language of the introduction is largely that of complex manifolds, we shift notation here into the language of algebraic/arithmetic geometry.  In Section~2, we write down an explicit model of the generic fibre in the $N=1$ case.  In Section~2 we treat the $N=2$ case.  Here, the generic fibre of our moduli space is a curve of genus 1.  In particular, describing the sections amounts to describing points on an elliptic curve over $\CC(\lambda)$.  Proving Theorem~\ref{th:N=2}, however, requires us to describe the group of points on this curve over the infinite procyclic extension $\bigcup_{n=1}^\infty \CC(\lambda^{1/n})$.  \emph{A priori}, the group of points on an elliptic curve over a field like this might not be finitely generated. As it transpires, though, the elliptic curve in question satisfies the conditions of a result of Fastenberg \cite{fastenberg}, and so we are able to completely describe the points on the curve over this field.  The finiteness claim following Theorem~\ref{th:N=2}, then, follows from Siegel's Theorem in function fields (which is made explicit in \cite{silvhindry}).  In Section~3 we show that the generic fibre of the moduli space is a curve of genus at least 2, for $N\geq 3$.  This proves Theorem~\ref{th:N>3}, given that Mordell's Conjecture holds in function fields (see, for example, \cite{vojta}).  In Section~4 we look into moduli spaces of polynomials of higher degree, proving Theorem~\ref{th:rational}.

Although we have chosen to remain relatively specific in this paper, and focus on cubic polynomials, much of what is done could be done for any two-parameter family of polynomials.  For example, the author worked out many analogous results for the family of biquadratic maps $f(z)=(z^2+a)^2+b$.  Similarly, it is possible to discuss the fibered surface of quadratic rational maps with a marked point of period $N$ (see \cite{epstein}, where it is shown that the multiplier fibration of the moduli space of quadratic rational maps with a marked point of period 3 is elliptic).


\section{Moduli spaces}

Our theorems are proved by constructing various curves over $K=\CC(\lambda)$, and examining the  points on these varieties rational either over $K$, or some extension of $K$.  These curves are the generic fibres of various fibred surfaces, but we leave the study of the underlying surfaces, for the most part, to future work.  First, we will discuss moduli spaces in general.

The standard moduli space of polynomials of degree $d$ is constructed as follows.  First, to each $\overline{a}=(a_d, ..., a_0)\in\AA^{d+1}$, we associate the polynomial
\[f_{\overline{a}}(z)=\sum_{0\leq i\leq d} a_iz^i.\]
To make things invariant of the choice of coordinates, we will take the quotient of this by the group of affine transformations $\phi(z)=\alpha z+\beta$.  This group  acts on the polynomials above by conjugation \[f^\phi= \phi\circ f\circ\phi^{-1},\] and the quotient variety will be called $\Pcal_d$.  The question of how to compactify this space is interesting, but beyond to scope of this paper.  Note that one might, equivalently (and probably more naturally), define $\Pcal_d$ to be the quotient of the space  of all \emph{rational} functions on $\PP^1$ with a totally ramified fixed point, modulo the action of conjugation by the full group of automorphisms of $\PP^1$.

  The moduli space $\Pcal_d(N)$, of polynomials with a marked point of period $N$, is defined similarly.  We let $\phi(x)=\alpha x+\beta$ act on $(a_d, ..., a_0, z)\in \AA^{d+2}$ by the action described above, for the first $d+1$ coordinates, and by $z^\phi=\phi(z)$.  Now, we may define polynomials $\Phi_d(a_d, ..., a_0, z)$ by
\[f_{\overline{a}}^N(z)-z=\prod_{k\mid N}\Phi_k(a_d, ..., a_0, z),\]
solutions of which correspond to polynomials with a marked point of formal period $N$ (see, e.g., \cite[p.~149]{ads}).  The quotient of the variety  $\{\Phi_N=0\}$ by the action of the affine transformations is $\Pcal_d(N)$.

Unfortunately, these moduli spaces do not interact particularly well with obvious normal forms for polynomials.  It is not uncommon to normalize polynomials so that they are monic, and the average of their roots (the \emph{barycenter}) vanishes:
\[f(z)=z^d+a_{d-2}z^{d-2}+\cdots a_1z+a_0.\]
Every polynomial of this form has degree $d$, and every polynomial of degree $d$ is affine-conjugate to one of this form.
Unfortunately, the affine transformation $z\mapsto \zeta z$, for $\zeta$ a primitive $(d-1)$th root of unity, acts non-trivially on polynomials of this form, and so the conjugacy class of the polynomial being defined over a given field is not the same as the coefficients being defined over that field (in other words, the field of moduli is often a proper subfield of the field generated by the coefficients in this particular normal form).  One might opt to use the normal form
\[f(z)=a_dz^d+a_{d-2}z^{d-2}+\cdots+a_1z+1,\]
where the field of moduli truly is the field generated by the coefficients, but this is also unsatisfactory.  This normal form offers only a birational correspondance between the space of coefficients and the space of polynomials.  The tuples of coefficients $(0, a_{d-2}, ... ,a_1, a_0)$ do not correspond to polynomials (of degree $d$), while polynomials which fix their own barycenter are not represented in this form.

To resolve this conflict, we define somewhat less high-brow moduli spaces for cubic polynomials, simply insisting on the normal form $f_{a, b}=z^3+az+b$. We will define a variety $\mathcal{Y}_1(N)$ as follows.  Let $Z\subseteq \AA^{N+2}_\CC$ be the variety defined by
\begin{gather}
f_{a, b}(z_1)-z_2=0\nonumber\\
f_{a, b}(z_2)-z_3=0\label{eq:Ycal}\\
\vdots\nonumber\\
f_{a, b}(z_N)-z_1=0\nonumber,
\end{gather}
which is clearly birational to the subvariety of $\AA^3$ defined by $f_{a, b}^N(z)-z=0$.  If $\Phi_N(a, b, z)$ is defined by
\[f^N_{a, b}(z)-z=\prod_{d\mid N}\Phi_d(a, b, z),\]
as above, then we will let $\mathcal{Y}_1(N)\subseteq Z$ be the variety corresponding, under this birational map, to the component $\Phi_N(a, b, z)=0$.  Now, we will let $Y_1(N)$, the generic fibre, be the $\CC[\lambda]$-scheme obtained by mapping $\CC[\lambda]$ into $\CC[\mathcal{Y}_1(N)]$ by
\[\lambda\mapsto f'_{a, b}(z_1)f'_{a, b}(z_2)\cdots f'_{a, b}(z_n).\]
(where the differentiation is with respect to $z$).  In other words, $Y_1(N)$ corresponds to the appropriate component of the subvariety of $\AA^{N+2}_{\CC[\lambda]}$ defined by the equations~\eqref{eq:Ycal}, along with the additional equation
$f'_{a, b}(z_1)\cdots f'_{a, b}(z_N)-\lambda=0$.

\begin{remark}
  Note that polynomials above all have coefficients in $\ZZ$, and so we could have defined $Y_1(N)$ as a  $\ZZ[\lambda]$ scheme.  While these objects are certainly worth studying, we focus our initial investigations to geometric properties, and so work over $\CC$ for simplicity.
  \end{remark}

Now, let $\hat{f}$ be the automorphism of $Y_1(N)$ defined by \[(a, b, z_1, ..., z_N)\mapsto (a, b, z_2, ..., z_N, z_1).\]  We will let $Y_0(N)$ denote the quotient of $Y_1(N)$ by this automorphism, and we will let $X_1(N)$ and $X_0(N)$, respectively, be smooth projective curves birational to $Y_1(N)$ and $Y_0(N)$.

  The curve $Y_0(N)$ parametrizes cubic polynomials $f(z)=z^3+az+b$ with marked cycles (rather than points) of period $N$.  In particular, recalling that $K=\CC(\lambda)$, $K$-rational points on $Y_0(N)$ correspond to cubic polynomials in $K[z]$ with marked cycles of period $N$, fixed setwise (but not necessarily pointwise) by the absolute Galois group $\Gal(\overline{K}/K)$.  

We will also define two curves $P_1(N)$ and $P_0(N)$, which will be the quotient of $X_1(N)$ and $X_0(N)$ by the automorphism induced by 
\[(a, b, z_1, ..., z_N)\mapsto (a, -b, -z_1, ..., -z_N).\]
Thus, $P_1(N)$ is precisely the generic fibre of the surface $\Pcal_3(N)$ (the non-na\"{i}ve moduli space), under the multiplier fibration.
Finally, we will make reference to the curves $X'_1(N)$, $X'_0(N)$, $P'_1(N)$, and $P'_0(N)$, which are the corresponding curves for \[f_{-3u^2, 2v^3}(z)=z^3-3u^2z+2v^3.\]

The following lemma tells us that the variety $Y_1(N)$ is always smooth.  More generally, it says that the variety parametrizing fixed points of any generic polynomial, with transcendental multiplier, is smooth.  A similar argument shows that the variety defined by $\Phi_3(a, b, z)=0$ and $(f_{a, b}^N)'(z)-\lambda=0$ is also non-singular, and so the birational map of affine varieties mentioned above is actually an isomorphism.
The actual statement of the lemma is slightly more general, since we will need this form later.

\begin{lemma}\label{lem:nonsingular}
Let $R$ be a Dedekind domain, let  $P\in R[a_1, ..., a_s, z]$ be a polynomial, let $\mu, \nu\in R$ be non-zero, let $t$ be transcendental over $R[a_1, ..., a_s, z]$, and let 
 $V\subseteq\AA^{s+n}_{R[t]}$ be the variety defined  by the equations 
 \begin{gather*}
 P(a_1, ..., a_s, z_1)-z_2=0\\
 P(a_1, ..., a_s, z_2)-z_3=0\\
 \vdots\\
 P(a_1, ..., a_s, z_n)-\mu z_1=0\\
 \prod_{i=1}^n \frac{\partial P}{\partial z}(a_1, ..., a_s, z_i)-\nu t=0.
 \end{gather*}
Then $V$ is non-singular.
\end{lemma}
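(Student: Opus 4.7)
The plan is to exhibit a non-vanishing $(n{+}1)\times(n{+}1)$ minor of the Jacobian of the defining equations $F_1,\dots,F_n,G$ at every geometric point of $Y$, using the transcendence of $t$ over $R$ at the decisive moment.

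First I would choose the columns of the Jacobian indexed by $(z_1,\dots,z_n,t)$. Since none of $F_1,\dots,F_n$ involves $t$, this submatrix is block triangular: the $t$-column has zeros in the first $n$ rows, with only $\partial G/\partial t=-\nu$ in the last row. The upper-left $n\times n$ block is the matrix $M$ of partials $\partial F_i/\partial z_j$, carrying $(\partial P/\partial z)(z_i)$ on the diagonal, $-1$'s on the superdiagonal, and a single entry $-\mu$ in position $(n,1)$ coming from the cyclic equation $F_n$. The whole submatrix determinant is therefore $-\nu\cdot\det(M)$.

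The central computation is $\det(M)=\prod_{i=1}^{n}(\partial P/\partial z)(z_i)-\mu$, obtained by cofactor expansion along the first column, whose only non-zero entries are $(\partial P/\partial z)(z_1)$ and $-\mu$. On $Y$, the defining equation $G=0$ allows the substitution $\prod_i(\partial P/\partial z)(z_i)=\nu t$, so $\det(M)=\nu t-\mu$, and the chosen Jacobian minor evaluates to $\nu(\mu-\nu t)$. Since $\mu,\nu\in R$ are non-zero and $t$ is transcendental over $R$, the element $\nu t-\mu$ is non-zero in $R[t]$; in particular, at every geometric point of the generic fibre $Y_K=Y\otimes_{R[t]}K$, where $K=\operatorname{Frac}(R[t])$, the $t$-coordinate is the transcendental generator of $K$, so $\nu t-\mu\neq 0$ there. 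It follows that the Jacobian attains its maximal rank $n+1$ on $Y_K$, which is the non-singularity asserted by the lemma.

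The main obstacle that this particular choice of minor sidesteps is the essential role of the $t$-column: if instead one used an $a_i$-column in place of $t$, the resulting determinant could vanish on proper subvarieties of $Y$ (for example on fibres where $\nu t=\mu$), and one would be forced into a case analysis involving further minors. Including the $t$-column lets the transcendence hypothesis enter in a single step, reducing the entire non-singularity question to the identity $\det(M)=\nu t-\mu$.
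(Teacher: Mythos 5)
Your choice of minor is where the argument goes astray. The lemma is asserting (and the paper is proving) that the generic fibre $Y_K$, $K=\operatorname{Frac}(R[t])$, is a smooth $K$-variety: the paper takes $Q\in Y(\overline{\CC(t)})$ and shows that the $(n{+}1)\times(s{+}n)$ Jacobian with respect to $a_1,\dots,a_s,z_1,\dots,z_n$ \emph{alone} has rank $n+1$. In that relative Jacobian there is no $t$-column at all: on $Y_K$, $t$ is a scalar in the base field, not a coordinate, so $\partial/\partial t$ is not an available column. Your $(n{+}1)\times(n{+}1)$ submatrix built from $(z_1,\dots,z_n,t)$ is therefore not a minor of the matrix whose rank governs non-singularity of $Y_K$ over $K$. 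What your computation does legitimately establish is that $\det(M)=\prod_i(\partial P/\partial z)(z_i)-\mu=\nu t-\mu\neq 0$, i.e.\ that the $n$ rows coming from $F_1,\dots,F_n$ are independent. That gives rank at least $n$, not $n+1$; it says nothing about whether the $G$-row lies in the span of the others. Since $Y_K$ has codimension $n+1$, that last step is exactly what is needed and is exactly what is missing.

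The paper supplies the missing step by a genuinely different device: assuming a linear relation $\sum_i\beta_i\,\partial G_i/\partial a_j+\beta_{n+1}\,\partial\Lambda/\partial a_j=0$ among the rows, it differentiates the tautology $\sum_i\beta_i G_i(Q)+\beta_{n+1}\Lambda(Q)=\beta_{n+1}\nu t$ with respect to $t$ and deduces $\beta_{n+1}=0$; only then does the $z$-minor argument finish the job. That differentiation with respect to the base parameter is the key idea your proof lacks. Your approach could in principle be rescued by a different route — read your minor as a minor of the absolute Jacobian of $Y$ over $R$, conclude $Y$ is smooth over $R$ (hence regular) at points above the generic point of $\operatorname{Spec} R[t]$, and then pass to $Y_K$ by localization and perfectness of $K$ — but none of that chain is articulated, and your closing remark about choosing the $t$-column "in place of an $a_i$-column" suggests you regard $t$ as a legitimate coordinate of the relative Jacobian, which it is not. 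As written, the proof does not establish the stated non-singularity.
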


\begin{proof}
To simplify notation, let $G_i$ denote the polynomial $P(a_1, ..., a_s, z_i)-z_{i+1}$ for $i\leq n-1$, and $G_n$ denote $P(a_1, ..., a_s, z_n)-\mu z_1$.  We will also let $\Lambda$ stand for the product $\prod_{i=1}^n \partial P/\partial z(a_1, ..., a_s, z_i)$ (as a function on $\AA^{s+n}$).  We will refer to $z_1, ..., z_n$ as $a_{s+1}, ..., a_{s+n}$ wherever it simplifies indexing.

Suppose that $V$ is singular, and let $Q\in V(\overline{\CC(t)})$ be a singular point.  By definition, we have $G_i(Q)=0$ for all $i$, and $\Lambda(Q)=\nu t$.  On the other hand, since $Q$ is a singular point, the Jacobian matrix of $V$ must have rank less than $n+1$ at $Q$.  Therefore, we must have some $\beta_1, ..., \beta_{n+1}\in \overline{\CC(t)}$, not all 0, such that
\[\sum_{i=1}^n\beta_i\frac{\partial G_i}{\partial a_j}(Q)+\beta_{n+1}\frac{\partial\Lambda}{\partial a_j}(Q)=0\]
for each $j$.  Note, the fact that we may consider $\partial \Lambda/\partial a_j$, in the above, follows from the observation that $\partial (\nu t)/\partial a_j=0$ for all $j$.  

First, we will show that $\beta_{n+1}=0$.  To see this, consider the equality 
\begin{equation}\label{eq:the G_i}\sum_{i=1}^n \beta_iG_i(Q)+\beta_{n+1}\Lambda(Q)=\beta_{n+1}\nu t.\end{equation}
This is an equality of functions in $t$, and so we may differentiate with respect to $t$.
Differentiating the right-hand-side of \eqref{eq:the G_i} in terms of $t$, one obtains
\[\beta_{n+1}\nu +\nu t\frac{d\beta_{n+1}}{dt}.\]
On the left-hand-side of \eqref{eq:the G_i}, one obtains
\begin{multline*}
\sum_{i=1}^nG_i(Q)\frac{d\beta_i}{dt}+\Lambda(Q)\frac{d\beta_{n+1}}{dt}  +\sum_{i=1}^n\beta_i\frac{dG_i(Q)}{dt}+\beta_{n+1}\frac{d\Lambda(Q)}{dt}\\
=\nu t \frac{d\beta_{n+1}}{dt}+\left(\sum_{i=0}^n\beta_i\sum_{j=1}^{s+n}\frac{\partial G_i}{\partial a_j}(Q)\frac{d a_j(Q)}{d_t}\right)+\sum_{j=1}^{s+n}\frac{\partial \Lambda}{\partial a_j}(Q)\frac{da_j(Q)}{dt}\\
=\nu t+\sum_{j=1}^{s+n}\frac{da_j(Q)}{dt}\left(\sum_{i=1}^n\beta_i\frac{\partial G_i}{\partial a_j}(Q)+\beta_{n+1}\frac{\partial \Lambda}{\partial a_j}\right)\\
= \nu t \frac{d\beta_{n+1}}{dt},
\end{multline*}
by the definition of the $\beta_i$.  In other words,
\[ \nu t \frac{d\beta_{n+1}}{dt}= \beta_{n+1}\nu +\nu t\frac{d\beta_{n+1}}{dt},\]
as functions of $t$, implying $\beta_{n+1}=0$, given that $\nu\neq 0$.

Thus, we've shown that $\beta_{n+1}=0$, and so $Q$ is in fact a singular point of the variety defined by just the first $n$ equations.  If the Jacobian matrix of this variety has rank less than $n$ at $Q$, though, it certainly implies that the matrix
\[\left(\begin{array}{ccccc}
\frac{\partial P}{\partial z_1}(Q) & -1 & 0& \cdots & 0\\
0 & \frac{\partial P}{\partial z_2}(Q) & -1 & \cdots & 0\\
& \vdots & & & \\
-\mu & 0 & 0 & \cdots & \frac{\partial P}{\partial z_n}(Q)
\end{array}\right)\]
is singular (since this $n\times n$ matrix is a sub-matrix of the Jacobian).  But this matrix has determinant
\[\prod_{i=1}^n\frac{\partial P}{\partial z_i}(Q)-(-1)^n\mu=\Lambda(Q)-(-1)^n\mu.\]
Since $Q$ satisfies $\Lambda(Q)=\nu t\neq (-1)^n\mu$, we have that $Q$ is a non-singular point of $Y$.
\end{proof}

\begin{remark}
Note that the proof above shows that the affine variety parametrizing all fixed points of $P(a_1, ..., a_s, z)$ is singular only on the fibre $\Lambda=1$.  Unfortunately, the projective closure of this variety has many and mysterious singularities at infinity.
\end{remark}

Our next task is to show that the curves $X_1(N)$ and $X_0(N)$ are geometrically irreducible, that is, irreducible over the algebraic closure of $\CC(\lambda)$.  

\begin{proposition}\label{prop:X_1(N) irred}
The curves $X_1(N)$ and $X_0(N)$ are geometrically irreducible.
\end{proposition}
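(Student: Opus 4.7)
The plan is as follows. Since $X_0(N) = X_1(N)/\langle \hat{f}\rangle$ is a quotient of $X_1(N)$ by the finite cyclic group generated by $\hat{f}$, and quotients of geometrically irreducible varieties by finite group actions remain geometrically irreducible, it suffices to treat $X_1(N)$. Writing $K=\CC(\lambda)$, and recalling that in characteristic zero geometric irreducibility over $K$ is equivalent to $K$ being algebraically closed in the function field, the task reduces to showing that $K$ is algebraically closed in $\CC(Y_1(N))$.

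The first step is to show that $Y_1(N)$ itself is irreducible as a $\CC$-variety. Because $Y_1(N)$ is birational over $\CC$ to the dynatomic hypersurface $\{\Phi_N(a,b,z)=0\}\subseteq\AA^3_\CC$, this amounts to the irreducibility of the dynatomic polynomial $\Phi_N(a,b,z)$ in $\CC[a,b,z]$ for the family $f_{a,b}(z)=z^3+az+b$, a known result (obtainable for instance via a specialization argument of the Morton--Silverman type, or the approach of Bousch). Combined with Lemma~\ref{lem:nonsingular}, $Y_1(N)$ is then a smooth, irreducible, affine $\CC$-surface.

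The second step is to show that $K$ is algebraically closed in $\CC(Y_1(N))$. Let $M$ denote this algebraic closure; then $M=\CC(C)$ for a uniquely determined smooth projective curve $C/\CC$, the inclusion $K\hookrightarrow M$ corresponds to a finite morphism $C\to\PP^1_\lambda$ of degree $d=[M:K]$, and the extension $\CC(Y_1(N))/M$ is regular. Passing to a smooth projective completion $\bar Y_1(N)$ of $Y_1(N)$ and applying Stein factorization to $\bar Y_1(N)\to\PP^1_\lambda$ (which realizes this $C$ as the intermediate finite cover), one finds that $d$ equals the number of geometrically connected components of the fibre of $\bar Y_1(N)\to\PP^1_\lambda$ over any point $\lambda_0$ lying outside the (finite) branch locus of $C\to\PP^1$. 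To force $d=1$, it suffices to exhibit a single such $\lambda_0\in\AA^1$ for which $\lambda^{-1}(\lambda_0)\subseteq Y_1(N)$ is connected.

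A natural candidate is $\lambda_0=0$, where the fibre parametrizes $N$-cycles of $f_{a,b}$ containing one of the two critical points $\pm\sqrt{-a/3}$. This fibre decomposes into at most $2N$ subvarieties indexed by the position $i$ of the critical point in the cycle and by the sign of that critical point, with the cyclic automorphism $\hat{f}$ transitively permuting the $N$ positional labels; gluing between the two sign components is effected by the locus where both critical points of $f_{a,b}$ lie on a common $N$-cycle. The main obstacles in the argument are to verify that this ``bicritical'' locus is non-empty for each $N\geq 2$ (so that the dual graph of the fibre is connected) and to check that $\lambda_0=0$ avoids the branch locus of $C\to\PP^1$; both should be tractable by explicit manipulation of $\Phi_N$.
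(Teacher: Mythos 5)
Your first two steps match the paper's: quotienting by $\hat f$ reduces to $X_1(N)$, and the irreducibility of $\mathcal Y_1(N)$ over $\CC$ is invoked from Morton. Where you diverge is the second step. The paper observes that a fibration $S\to C$ with an irreducible total space has irreducible generic fibre as soon as it admits a section, and then simply writes down an explicit $\CC(\lambda)$-rational point on $X_1(N)$ — namely the point $[u,v,s,z_1,\ldots,z_N]=[1,1,0,1,-2,\ldots,-2]$ on the projective model of $X_1'(N)$, whose Jacobian is checked to be nonsingular so that it lifts to the normalization. This is short, uniform in $N$, and sidesteps any analysis of special fibres.

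Your Stein-factorization route is conceptually sound but is not a complete proof as written, and the two obstacles you flag are more than routine. First, you need $\lambda_0$ to lie outside the branch locus of $C\to\PP^1$, but $C$ is precisely the object whose triviality you are trying to establish; you cannot locate its branch points without first controlling $C$. Worse, $\lambda_0=0$ is a natural candidate for a branch point: it is a degenerate fibre of the multiplier fibration (for $N=2$ the paper records type $\I_1$ there), and Stein factorization identifies the degree $d$ with the number of components only over points of \emph{good} reduction; at a bad fibre the count can drop. Second, your description of the fibre over $\lambda_0=0$ as ``at most $2N$ subvarieties indexed by position and sign'' is off: for fixed $i$ the locus $3z_i^2+a=0$ is an irreducible parabola over $\CC$, so the ``sign'' does not produce two separate components, and the actual decomposition (and whether the resulting dual graph is connected, via the bicritical locus) would need to be established explicitly for each $N$. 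You acknowledge both of these as things ``which should be tractable,'' but until they are supplied the argument has a real gap, and the amount of work required looks larger than the paper's one-line exhibition of a section.
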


\begin{proof}
Let $S$ be any smooth, projective, irreducible surface over an algebraically closed field, and let $\pi:S\rightarrow C$ be a fibration of $S$.  The generic fibre of the fibration is reducible if and only if the fibration factors as
\[S\stackrel{\pi'}{\longrightarrow} C'\stackrel{\phi}{\longrightarrow} C,\]
for some morphism of curves $\phi:C'\rightarrow C$ of degree greater than one (see, for example, \cite[p.~139]{shaf}). 
In particular, if the surface admits a section $\sigma:C\rightarrow S$, then the generic fibre must be irreducible, since the identity map $\pi\circ\sigma:C\rightarrow C$ cannot factor non-trivially.  Note that $S$ admits a section if and only if the generic fibre has a point rational over $\CC(C)$.

It follows from work of Morton \cite{morton} that $\mathcal{Y}_1(N)$ is irreducible.  
  To show that $X_1(N)$ is irreducible, then, it suffices to show that $X_1(N)(K)$ is non-empty.

The projective variety defined over $\CC(\lambda)$ by
\begin{gather}
z_1^3-3u^2z_1+2v^3-z_2s^2=0\nonumber\\
z_2^3-3u^2z_2+2v^3-z_3s^2=0\label{eq:X' singular}\\
\vdots\nonumber\\
z_{N}^3-3u^2z_{N}+2v^3-z_1s^2=0\nonumber\\
3^N(z_1^2-u^2)(z_2^2-u^2)\cdots(z_{N}^2-u^2)-\lambda s^{2N}=0
\end{gather}
contains a component birational to $X'_1(N)$, and this component 
 has a $\CC(\lambda)$-rational point at 
\[P=[u, v, s, z_1, ..., z_N]=[1, 1, 0, 1, -2, ..., -2].\]
Furthermore, one checks rather easily that the Jacobian matrix of the variety at this point is
\[\left(\begin{array}{ccccc}
0 & 0 & 0 & \cdots & -6\\
0 & 9 & 0 & \cdots & 12 \\
0 & 0 & 9 & \cdots & 12 \\
& \vdots & & \ddots &\vdots \\
6\cdot 9^{N-1} & 0 &  0 & \cdots & -6\cdot 9^{N-1} \\
\end{array}\right),\]
which is non-singular.  Consequently, $P$ corresponds to a $\CC(\lambda)$-rational point on the normalization $X'_1(N)$.  The map induced by $a=-3u^2$, $b=2v^3$ sends this to a $\CC(\lambda)$-rational point on the curve $X_1(N)$.

The irreducibility of $X_0(N)$ simply follows from it being a quotient of $X_1(N)$.
\end{proof}

\section{The case $N=1$}

The space of cubic polynomials with a marked fixed point turns out, unsurprisingly, to be fairly easy to describe.

\begin{proposition}
The curve $X_1(1)=X_0(1)$ is birational, over $\CC(\lambda)$, to $\PP^1$.  The rational parametrization is given by
\begin{gather*}
a = -27s^2+\lambda,\\
b=-54s^3-3s+3\lambda s,\\
z=-3s,
\end{gather*}
for $s\in \PP^1$.
\end{proposition}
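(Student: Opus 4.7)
The plan is to use the defining equations of $Y_1(1)$ directly and solve for $a$ and $b$ in terms of $z_1$ and $\lambda$, which will immediately exhibit a birational equivalence with an affine line. Recall that $Y_1(1)$ is defined inside $\AA^3_{\CC[\lambda]}$ by the period-1 equation $f_{a,b}(z_1)-z_1=0$ together with the multiplier equation $f'_{a,b}(z_1)-\lambda=0$; explicitly,
\begin{gather*}
z_1^3+(a-1)z_1+b=0,\\
3z_1^2+a-\lambda=0.
\end{gather*}

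First I would solve the second equation linearly for $a$, obtaining $a=\lambda-3z_1^2$, and then substitute this into the first to obtain $b=2z_1^3+(1-\lambda)z_1$. Thus the projection $(a,b,z_1)\mapsto z_1$ defines a morphism from $Y_1(1)$ to $\AA^1_{\CC[\lambda]}$ whose inverse is given by the two explicit polynomial formulas above; this is an isomorphism of affine schemes. Passing to the smooth projective model, $X_1(1)\cong\PP^1_{\CC(\lambda)}$. The stated parametrization is then obtained by the cosmetic change of variable $z_1=-3s$, which rewrites $a=\lambda-3(-3s)^2=-27s^2+\lambda$ and $b=2(-3s)^3+(1-\lambda)(-3s)=-54s^3-3s+3\lambda s$, matching the formulas in the statement.

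It remains only to observe the identification $X_1(1)=X_0(1)$. By definition $Y_0(N)$ is the quotient of $Y_1(N)$ by the cyclic shift $(a,b,z_1,\ldots,z_N)\mapsto(a,b,z_2,\ldots,z_N,z_1)$; for $N=1$ this shift is the identity, so $Y_0(1)=Y_1(1)$ and hence $X_0(1)=X_1(1)$. There is essentially no obstacle here — the case $N=1$ is special precisely because the two defining equations are already of low enough degree in $a$ and $b$ to be solved without recourse to any genus or Jacobian computation; the only thing to check is that the substitution above does indeed land on the component $\Phi_1(a,b,z)=0$ rather than some higher periodic component, which is automatic since $\Phi_1(a,b,z)=f_{a,b}(z)-z$ for $N=1$.
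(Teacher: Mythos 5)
Your proof is correct, and it takes a cleaner route than the paper's. The paper eliminates $z$ via resultants to obtain a plane cubic model $27b^2+(a-\lambda)(2a-3+\lambda)^2=0$ in the $(a,b)$-plane, observes this curve has a node at $(b,2a-3+\lambda)=(0,0)$, and then blows up the node to extract the parametrization. You instead observe that the multiplier equation $3z_1^2+a-\lambda=0$ is linear in $a$ and the fixed-point equation $z_1^3+(a-1)z_1+b=0$ is linear in $b$, so $(a,b)$ is determined polynomially by $z_1$, giving a direct isomorphism of the affine model $Y_1(1)$ with $\AA^1_{\CC[\lambda]}$; the substitution $z_1=-3s$ then recovers exactly the stated formulas (and indeed both approaches yield the same answer, as the variable $z$ you retain is precisely the parameter the blowup reintroduces). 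Your computation $a=\lambda-3z_1^2$, $b=2z_1^3+(1-\lambda)z_1$ checks out. You also correctly handle the point $X_1(1)=X_0(1)$, which the paper states but does not spell out, and you note the trivial identification $\Phi_1=f_{a,b}(z)-z$. The only thing the paper's route offers in exchange for its extra work is a description of $Y_1(1)$ as a double cover of its image in the $(a,b)$-plane, which is the organizing picture used in the $N=2$ case; for the proposition at hand your argument is shorter and self-contained.
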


\begin{proof}
The curve is described by the two equations
\begin{gather*}
\Phi_1(z, a, b)=f(z)-z=z^3+(a-1)z+b=0\\
\intertext{and}
f'(z)-\lambda=3z^2+a-\lambda=0.
\end{gather*}
Eliminating the variable $z$ (via resultants) we obtain the relation
\[27b^2+(a-t)(2a-3+\lambda)^2=0,\]
a nodal cubic curve over $\CC(\lambda)$.  Setting $u=2a-3+\lambda$, and blowing up at $(b, u)=(0,0)$ by setting $b=sw$, $u=w$, we obtain two components: $w=0$ (with multiplicity 2; this is the exceptional curve), and
\[27s^2-\frac{1}{2}(3\lambda-3-w)=0.\]
This yields
\[a=-27s^2+\lambda, \qquad b=-54s^3-3s+3\lambda s.\]
We may now solve $f(z)-z=0$ for the fixed point:
\[f(z)-z=(z+3s)(z^2-3z s-18s^2-1+\lambda).\]
\end{proof}

Note that the map above gives an isomorphism of the surface $\mathcal{Y}_1(N)$, defined by \[z^3+az+b-z=0,\] with the affine plane $\AA^2$, where the multiplier is sent to one of the two coordinates.  Thus, the smooth projective model of this surface, which is minimal relative to the multiplier fibration, is isomorphic to $\PP^1\times\PP^1$ with projection onto the second coordinate.  Note that this also gives us an explicit description of $\Pcal_3(1)$.  The action of $\PSL_2$ on $\mathcal{Y}_1(1)$ is exactly the map $(s, t)\mapsto (-s, t)$ on $\PP^1\times\PP^1$ as above.  In particular, the map $(s, t)\mapsto (s^2, t)$ gives a map to $\Pcal_3(1)\cong\PP^1\times\PP^1$.


\section{The case $N=2$}

The case $N=2$ is somewhat richer and more interesting than the case $N=1$.  Here, the parametrizing curves $X_1(2)$ and $X_0(2)$ have genus one; they are, in fact, non-isotrivial elliptic curves over $\CC(\lambda)$.  In general, this means that for any compact Riemann surface $X\rightarrow \hat{\CC}$, the set of points on $X_0(2)$ or $X_1(2)$ over $\CC(X)$ has the structure of a finitely generated abelian group, although the structure of this group depends a great deal on the particular covering $X\rightarrow\hat{\CC}$.  It turns out, quite surprisingly, that we can describe this group explicitly for $X=\hat{\CC}\rightarrow\hat{\CC}$ by $\lambda=w^m$, for any $m$.

\begin{proposition}\label{X_1(2) bir}
The curves $X_0(2)$ and $X_1(2)$, respectively, are isomorphic over $\CC(\lambda)$ to the curves
\[E_0:v^2=u(u^2+2u+1-\lambda)\]
and
\[E_1:e^2=d(d^2-4d+4\lambda),\]
and the natural map $X_1(2)\rightarrow X_0(2)$ induces the isogeny $E_1\rightarrow E_0$ with kernel generated by $(0, 0)$.
\end{proposition}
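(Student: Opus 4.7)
The plan is to derive explicit plane quartic models for $X_1(2)$ and $X_0(2)$, transform each to Weierstrass form by expanding around a $\CC(\lambda)$-rational point at infinity, and then trace the swap involution on $X_1(2)$ through the substitutions to identify the quotient $X_1(2)\to X_0(2)$ with translation by the $2$-torsion point $T=(0,0)$ on $E_1$.

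Introducing $s=z_1+z_2$, $p=z_1z_2$, $\delta=z_1-z_2$, the period-$2$ relations $f(z_1)=z_2,\ f(z_2)=z_1$ give $p=s^2+a+1$ (subtracting and dividing by $\delta\neq 0$) and $b=s(p+1)$ (adding), so $a$ and $b$ are rational functions of $(s,\delta)$. Writing $z_{1,2}=(s\pm\delta)/2$, one computes $(3z_1^2+a)(3z_2^2+a)=\tfrac14\bigl((\delta^2-2)^2-9s^2\delta^2\bigr)$, so with $e:=3s\delta$ the multiplier equation becomes
\[e^2=(\delta^2-2)^2-4\lambda,\]
a plane quartic model of $X_1(2)$. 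The swap $(z_1,z_2)\mapsto(z_2,z_1)$ acts as $(\delta,e)\mapsto(-\delta,-e)$; its invariants are $\delta^2=s^2-4p$ and $e^2=9s^2(s^2-4p)$, and combining these with $y:=8p+7s^2+4$ produces the quartic
\[y^2=81s^4+72s^2+16\lambda\]
as a model of $X_0(2)$. Each quartic has a pair of $\CC(\lambda)$-rational points at infinity. Expanding around one of them (set $e=\delta^2+u$, resp.\ $y=9s^2+\eta$), solving for $\delta^2$ (resp.\ $s^2$), clearing denominators via $V=2\delta(u+2)$ (resp.\ $V_0=18s(\eta-4)$), shifting $u\mapsto w-2$ (resp.\ $\eta\mapsto 4w+4$), and absorbing a factor of $\sqrt{-2}$ into the vertical coordinate delivers the stated Weierstrass forms $E_1$ and $E_0$.

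For the isogeny, the $E_1$-coordinate of a point on $X_1(2)$ works out to $d=e-\delta^2+2=3s\delta-\delta^2+2$. The swap $\sigma\colon\delta\mapsto-\delta$ sends this to $d'=-3s\delta-\delta^2+2$, and a direct calculation gives
\[d\cdot d'=(2-\delta^2)^2-(3s\delta)^2=(2-\delta^2)^2-e^2=4\lambda.\]
On $E_1\colon e^2=d(d^2-4d+4\lambda)$, however, translation by $T=(0,0)$ acts as $(d,e)\mapsto(B/d,-eB/d^2)=(4\lambda/d,-4\lambda e/d^2)$, since the line through $P$ and $T$ has slope $e/d$ and its third intersection with $E_1$ has $d$-coordinate $B/d$. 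Hence $\sigma$ is precisely translation by $T$ in the Weierstrass model, so the quotient map is the $2$-isogeny $E_1\to E_1/\langle T\rangle$; V\'elu's formula gives the target $y^2=x(x^2+8x+16(1-\lambda))$, which matches $E_0$ under $(x,y)=(4u,8v)$. The main obstacle is nothing conceptual but the careful bookkeeping of the constant rescalings (powers of $2$ and $\sqrt{-2}$) in the Weierstrass reductions; the identity $d\cdot d'=4\lambda$ is what forces the kernel of the isogeny to be $\langle(0,0)\rangle$ in the stated coordinates, rather than any other $2$-torsion subgroup.
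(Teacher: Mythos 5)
Your proof is correct, and it takes a genuinely different route from the paper. The paper proceeds by projecting $Y_1(2)$ onto the $(a,b)$-plane, computing a degree-$6$ resultant $R(a,b)$, and then verifying (via Maple) that the explicitly stated rational functions $a(u,v), b(u,v)$ parametrize the locus $R=0$; it identifies $E_0$ with the normalization of that plane curve by analyzing the single nodal point at infinity, and handles $X_1(2)$ by pulling $a,b$ back along the isogeny and comparing degrees of maps in a commuting square. Your approach instead eliminates $a,b$ from the start via the symmetric-function substitution $s=z_1+z_2$, $\delta=z_1-z_2$, producing the quartic $e^2=(\delta^2-2)^2-4\lambda$ ($e=3s\delta$) for $X_1(2)$ and then the invariant quartic $y^2=81s^4+72s^2+16\lambda$ for $X_0(2)$, each of which is converted to Weierstrass form by the standard expansion at infinity. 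This is more hands-on and transparent than the paper's computer-verified parametrization, and it buys you the isogeny statement essentially for free: you exhibit $d=3s\delta-\delta^2+2$ as the Weierstrass $x$-coordinate on $E_1$, observe $d\cdot d^\sigma=4\lambda$ under the swap $\sigma$, and recognize this (together with the sign of the $y$-coordinate) as translation by $T=(0,0)$, whence the quotient is the $2$-isogeny with kernel $\langle T\rangle$ and V\'elu gives $E_0$. In contrast the paper never isolates a conceptual reason for the kernel being $\langle(0,0)\rangle$; it just observes that the horizontal maps in its square have the same degree. One small remark on bookkeeping: the invariant-ring presentation already gives a Weierstrass model directly, since $D:=\delta^2$ and $E:=e\delta$ satisfy $E^2=D(D^2-4D+4-4\lambda)$, and the rescaling $D=-2u$, $E=2\sqrt{-2}\,v$ lands on $E_0$ without passing through the quartic $y^2=81s^4+72s^2+16\lambda$; this would shorten your write-up, but your version is correct as stated.
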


The birational maps to the affine models $Y_1(2)$ and $Y_0(2)$ are given by
\[a=\frac{4u^2-4u+1-\lambda}{6u}\]
\[b=\frac{\sqrt{-2}(8u^2+16u+\lambda-1)v}{54u^2},\]
and
\[z=\frac{\sqrt{-2}(d^2-6d +8\lambda)}{6e}.\]
Note that $z$ is defined only on $E_1$ for obvious reasons, while the maps $a, b:E_1\rightarrow Y_1(2)$ are defined by composition with the isogeny.  Note, as well, that the functions $a, b\in K(E_0)$ have poles precisely at the ``obvious'' points on $E_0$, that is, the point at infinity, and the point $(0, 0)$.  In particular, these points do not lead to cubic polynomials, which would contradict our claim that there are no multiplier sections of period $2$.
It is also worth noting, with a view to analogous problems over function fields, that the above birational maps are defined over $\QQ(\lambda, \sqrt{-2})$.

The remainder of this section will be devoted to uncovering the arithmetic of these curves over the field $K_\infty=\bigcup_{n\geq 1}\CC(\lambda^{1/n})$, which we do largely through the application of a theorem of Fastenberg \cite{fastenberg}, with some minor improvements.  (This appears to be the first time that Fastenberg's result has been used in a ``natural setting''.)  In general, it is not at all clear that the group of points on a given elliptic curve $E/\CC(\lambda)$ which are $K_\infty$-rational should be finitely generated.  To provide an interesting contrast, let $F/\CC(\lambda)$ be the field of Laurent series in $\lambda$, $F=\CC((\lambda))$.  Then an application of Tate's non-archimedean uniformization of elliptic curves shows that $E_0(F)$ is a group containing a cyclic subgroup of order $m$, for \emph{each} $m$.  That is, the group of germs of multiplier sections at $\lambda=0$ is far from finitely generated.

\begin{proof}[Proof of Proposition~\ref{X_1(2) bir}]
One way to construct an explicit affine curve birational to $X_0(N)$ is to consider the projection of the curve $Y_1(N)$ onto the $(a, b)$-plane.  This is given by the resultant of
\[\Phi_2=\frac{f(f(z))-z}{f(z)-z}\qquad\text{and}\qquad \frac{\partial f^2}{\partial z}-\lambda,\]
as polynomials in $z$.  This resultant is the square (since this projection is a double-cover) of the polynomial
\begin{multline*}
R=729+972a-432a^3-108a^4+48a^5+16a^6+1458b^2+1215b^2a+324b^2a^2\\+216b^2a^3+729b^4-243\lambda-216\lambda a+48\lambda a^3+12\lambda a^4\\-162\lambda b^2+81a\lambda b^2+27\lambda^2+12a\lambda^2-\lambda^3.
\end{multline*}
Let $C=E_0\setminus\{\Ocal, (0,0)\}$, where $\Ocal$ is the point at infinity, and let $C'\subseteq\AA^2$ be the locus of vanishing of $R$.  
One can check, with a computer algebra package such as Maple, that the functions $a$ and $b$ defined above actually provide a morphism from $C$ to $C'$ (that is, that the function $R(a, b)$ vanishes identically on $C$).  Now, note that there is precisely one point at infinity on the closure of $C'$ in $\PP^2$, and it is a nodal singularity.  The map $C\rightarrow C'$ extends to a morphism sending $\Ocal$ and $(0,0)$ to this nodal singularity.  Thus, the singular point on the projective closure of $C'$ corresponds to two points on the normalization of $C'$, each of which has precisely one preimage under the morphism induced by this rational map.  That is to say, the morphism $C\rightarrow C'$ induces an isomorphism between $E_0$ and the normalization of the projective completion of $C'$.

We now know that $X_0(2)$ is isomorphic to the elliptic curve $E_0$, and we turn our attention to $X_1(2)$.  Note that
\[\Phi_2(a, b, z)=a^2z^2+2z^4a+az^2+2azb+a+z^6+z^4+2z^3b+z^2+bz+b^2+1\]
and
\begin{multline*}
\frac{\partial f^2}{\partial z}-\lambda =9z^8+21z^6a+15z^4a^2+18z^5b+24z^3ba \\+3a^3z^2+6a^2zb+9b^2z^2+3b^2a+3az^2+a^2-\lambda.
\end{multline*}
Composing the maps $a, b\in \CC(E_0)$ defined above with the isogeny $E_1\rightarrow E_0$, defined by
\[
u=\frac{e^2}{4d^2} \quad\text{ and }\quad
v=\frac{e(d^2-4\lambda)}{8d^2},\]
we see that we have a simulateneous solution to the equations above with \[z=\frac{\sqrt{-2}(d^2-6d +8\lambda)}{6e}.\]
In other words, we have constructed a map $E_1\rightarrow X_1(2)$ which makes the diagram
\[\begin{CD}
E_1 @>>> E_0\\
@VVV @VVV\\
X_1(2) @>>> X_0(2)
\end{CD}\]
commute.  Since the rightmost map is an isomorphism, and the two horizontal maps have the same degree, the leftmost map also has degree 1, and is therefore an isomorphism.
\end{proof}

\begin{remark}
The map $X_1(N)\rightarrow X_0(N)$ sending a point of period $N$ to the cycle it generates is an obvious map from the point of view of moduli spaces.  The fact that, in the case $N=2$, these curves are both elliptic, however, means that there is a dual map $X_0(N)\rightarrow X_1(N)$, also unramified and of degree 2.  It would be interesting to understand the interpretation, if any, of this map in terms of the underlying dynamics.  That is, given two cubic polynomials with marked 2-cycles, how does the cubic polynomial with a marked point of period 2 arise?
\end{remark}

The Mordell-Weil theorem tells us that the rational points on $E_0$ or $E_1$ over any finite extension of $\CC(\lambda)$ has the structure of a finitely generated group.  We wish to compute this structure over extensions of the form $K_n=\CC(\lambda^{1/n})$, and indeed over $K_\infty=\bigcup_{n\geq 1}K_n$.  We will focus on the arithmetic of $E_0(K_\infty)$, given the obvious map of moduli spaces $X_1(N)\rightarrow X_0(N)$.

\begin{proposition}\label{prop:mw group}
We have $E_0(K_\infty)=E_0(K_{12})$.  Moreover, if $t^{12}=\lambda$ and $\zeta^4-\zeta^2+1=0$ (i.e., $\zeta$ is a primitive 12th root of unity), then $E_0(K_{12})$ is (abstractly) isomorphic to $\ZZ^3\oplus (\ZZ/2\ZZ)^2$, generated by the following points:
\begin{gather*}
P=\big(-1+(i-1)t^3+it^{6},(1-i)(t^3+i)(t^3+1)t^3\big)\\
R_1=(t^4-1, \zeta^9t^4(t^4-1))\\
R_2=(\zeta^4t^4-1, \zeta t^4(\zeta^4t^4-1))\\
T_1=\big(0, 0\big)\\
T_2=\big(-1+t^{6}, 0\big).
\end{gather*}
\end{proposition}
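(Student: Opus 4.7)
The strategy is to apply the theorem of Fastenberg \cite{fastenberg} to reduce from the infinite tower $K_\infty$ to an explicit finite level, and then verify the group structure by descent combined with height computations. The first step is to put $E_0$ into minimal Weierstrass form and compute its discriminant and $j$-invariant. A routine calculation with $v^2 = u(u^2 + 2u + 1 - \lambda)$ gives $\Delta$ proportional to $\lambda(1-\lambda)^2$, so the singular fibres of the associated elliptic surface lie only over $\lambda \in \{0, 1, \infty\}$, and the $j$-invariant is visibly non-constant. Running Tate's algorithm at each of these three places produces the Kodaira symbols, which must then be matched against the hypothesis of Fastenberg's theorem; doing so should give finite generation of $E_0(K_\infty)$ and, after bookkeeping with the ramification indices needed to resolve the bad fibres, the stabilization $E_0(K_\infty) = E_0(K_{12})$.

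Next I verify by direct substitution (with $t = \lambda^{1/12}$ and $\zeta$ a primitive $12$th root of unity) that the five listed points actually lie on $E_0$. Both $T_1$ and $T_2$ have $v$-coordinate zero, and together with their sum they exhaust the $2$-torsion over $K_{12}$ (the third root of the cubic $u(u^2+2u+1-\lambda)$ being visible already in $K_2 \subseteq K_{12}$). To see that no larger torsion appears, I would exploit non-isotriviality together with specialization: since $j(\lambda)$ is non-constant, a hypothetical $\ell$-torsion point with $\ell > 2$ rational over $K_{12}$ would, upon specialization to two distinct generic values of $t$, produce two complex elliptic curves sharing a common $\ell$-torsion structure, which is incompatible with the $j$-invariant being non-constant.

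For the Mordell-Weil structure of the free part, I would establish the upper bound $\rank E_0(K_{12}) \leq 3$ by a $2$-descent: since the full $2$-torsion is already defined over $K_2$, this reduces to bounding an image in $(K_{12}^\times / K_{12}^{\times 2})^2$ supported only at the places of bad reduction over $K_{12}$, namely the fibres above $t = 0$, the $12$th roots of $1$, and $\infty$. For the lower bound, linear independence of $P, R_1, R_2$ can be confirmed by computing their Silverman--Tate canonical heights on the associated elliptic surface and checking that the $3 \times 3$ height-pairing matrix is non-singular. The main obstacle I anticipate is arranging the descent so that the Selmer rank matches exactly the rank of the subgroup generated by $P, R_1, R_2$, ensuring that these three points really are a basis and not merely an index $> 1$ subgroup; once the Selmer bound and the height regulator are shown to agree, the claimed structure $\ZZ^3 \oplus (\ZZ/2\ZZ)^2$ and the explicit generators follow.
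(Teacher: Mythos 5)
Your overall architecture (Fastenberg to control the tower, explicit generators, a matching argument for the index) is the same as the paper's, but there is a genuine error in the torsion step, and the plan leaves the two hardest parts --- pinning the index to $1$ and showing stabilization at level $12$ --- at a level of vagueness where the real work lives.

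The torsion argument is not valid. You claim that if $E_0(K_{12})$ had an $\ell$-torsion point for $\ell > 2$, specializing $t$ to two distinct generic values would give two complex elliptic curves ``sharing a common $\ell$-torsion structure,'' contradicting non-constancy of $j$. But over $\CC$ every elliptic curve has full $\ell$-torsion for every $\ell$, so specialization produces no contradiction whatsoever; non-isotriviality is simply not an obstruction here. What a rational $\ell$-torsion point actually furnishes is a morphism $\PP^1 \to \XEll_1(\ell)$ through which the $j$-map $\PP^1 \to \PP^1$ factors, and the obstruction is a degree and ramification count on the modular curve $\XEll_1(\ell)$. This is precisely what the paper does: it compares the pole structure of $j_{E_0}$ (one pole of order $n$ at $0$, $n$ poles of order $2$ at the $n$th roots of $1$, regular at $\infty$) with the known pole structure of the $j$-map on $\XEll_1(p)$, and derives incompatible degrees for the factoring morphism $\phi$. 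Your argument would need to be replaced wholesale by this kind of modular-curve analysis.

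Two further gaps worth flagging. First, Fastenberg's theorem gives rank bounds over each $K_n$ but does not, by itself, yield $E_0(K_\infty)=E_0(K_{12})$; the paper must additionally show that any $Q\in E_0(K_\infty)$ is quadratic over $K_{12}$ with $2Q\in E_0(K_{12})$, and then rule this out by an explicit computation (Lemma~\ref{lem:index}, the ``$E(K_{24})/H_3$ has no $2$-torsion'' check, done by factoring the pullback of the duplication map in MAGMA). Second, the crux of ``the Selmer bound and height regulator agree'' is exactly the nontrivial content that the paper handles through Shioda's height-pairing formula over $K_3$ and $K_4$ (yielding determinants $1/4$ and $1/12$), an index bound from the denominators of local contributions, and the computational Lemma~\ref{lem:index}. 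Your $2$-descent is a legitimate alternative for the rank upper bound, and is genuinely different from Fastenberg plus the Shioda-formula improvement the paper uses; but replacing those steps with ``arrange the descent so it matches'' is where the proof, as written, would stall.
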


One can easily check that the last two points each have order 2, and it will be shown below that the first three are independent points of infinite order.  The first step in proving that these points in fact generate $E_0(K_\infty)$ is to
 prove that $E_0(K_\infty)$ has the claimed torsion subgroup.

\begin{lemma}
Let $E_0/\CC(\lambda)$ be the elliptic curve described above.  Then
\[E_0(K_\infty)_{\mathrm{Tors}}=E_0[2]\subseteq E_0(K_2).\]
\end{lemma}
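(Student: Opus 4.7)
The three nonzero 2-torsion points of $E_0$ have $u$-coordinates $0$ and $-1 \pm \sqrt{\lambda}$, so $E_0[2] \subseteq E_0(K_2) \subseteq E_0(K_\infty)$; the substance of the lemma is the reverse inclusion. Since a torsion abelian group is the direct sum of its primary parts, it suffices to show $E_0[\ell^\infty](K_\infty) \subseteq E_0[2]$ for every prime $\ell$.

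For odd $\ell$, I would analyse the mod-$\ell$ Galois representation $\rho_\ell \colon G \to \GL_2(\FF_\ell)$, where $G := \Gal(\overline{\CC(\lambda)}/\CC(\lambda))$. Because $\mu_\ell \subseteq \CC$ we have $\det \rho_\ell = 1$, so the image $H$ lies in $\SL_2(\FF_\ell)$. A direct computation gives $\Delta = 64\lambda(1-\lambda)^2$, and a minimal-model substitution at $\lambda = \infty$ reveals the Kodaira types to be $I_1$, $I_2$ and $III$ at $\lambda = 0, 1, \infty$ respectively. Grothendieck's monodromy theorem then yields local inertia generators $\sigma_0, \sigma_1$ that are unipotent of order $\ell$ (using $2 \not\equiv 0 \MOD{\ell}$), and $\sigma_\infty$ of order $4$ satisfying $\sigma_\infty^2 = -I$. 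Since $\sigma_\infty = (\sigma_0 \sigma_1)^{-1}$, the product $\sigma_0 \sigma_1$ has order $4$, which prevents $\sigma_0, \sigma_1$ from sharing a fixed line. Thus $H$ acts irreducibly on $\FF_\ell^2$ and contains multiple Sylow $\ell$-subgroups, so Dickson's classification of subgroups of $\SL_2(\FF_\ell)$, together with $-I = \sigma_\infty^2 \in H$, forces $H = \SL_2(\FF_\ell)$.

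Because $K_\infty/\CC(\lambda)$ is the maximal abelian extension of $\CC(\lambda)$ unramified outside $\{0, \infty\}$, the subgroup $N := \rho_\ell(\Gal(\overline{\CC(\lambda)}/K_\infty))$ is normal in $H$ with cyclic quotient and contains the inertia image $\langle \sigma_1 \rangle$ at $\lambda = 1$. For $\ell \geq 5$, $\SL_2(\FF_\ell)$ is perfect, so $N = H$ immediately. For $\ell = 3$, the only normal subgroups of $\SL_2(\FF_3)$ with cyclic quotient are $Q_8$ and $\SL_2(\FF_3)$; since $\sigma_1$ has order $3$ while $Q_8$ has no element of order $3$, we must again have $N = \SL_2(\FF_3)$. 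In either case $H$ fixes no nonzero vector in $\FF_\ell^2$, so $E_0[\ell](K_\infty) = 0$ and hence $E_0[\ell^\infty](K_\infty) = 0$.

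For $\ell = 2$ I would rely on explicit computation. The duplication formula on $E_0$ reads $u(2P) = (u(P)^2 - 1 + \lambda)^2 / \bigl(4 u(P)((u(P)+1)^2 - \lambda)\bigr)$; setting this equal in turn to each nonzero 2-torsion $u$-value yields, for the $u$-coordinate of a putative 4-torsion point $P$, a quadratic over $K_2$ whose discriminant is, up to squares, one of $1-\lambda$, $2\sqrt{\lambda}(\sqrt{\lambda}-1)$, or $2\sqrt{\lambda}(\sqrt{\lambda}+1)$. Viewed inside $K_n = \CC(t)$ with $t^n = \lambda$, each of these expressions has a divisor with an odd-multiplicity zero, hence is a non-square in every $K_n$ and thus not a square in $K_\infty$. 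This rules out $4$-torsion beyond $E_0[2]$, and a straightforward induction extends the conclusion to all $E_0[2^k](K_\infty)$. Combining the primary components yields $E_0(K_\infty)_{\mathrm{Tors}} = E_0[2]$. The main technical hurdle I anticipate is the precise identification of the Kodaira type at $\lambda = \infty$ and the shape of $\rho_\ell(I_\infty)$; once these are in hand, the rest of the argument reduces to standard group theory together with an elementary non-square analysis of divisors on $\PP^1$.
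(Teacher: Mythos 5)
Your proof is correct but takes a genuinely different route from the paper's. The paper argues via modular curves: if $E_0$ acquired a point of odd prime order $p$ over some $K_n$, the $j$-map of $E_0$ would factor through the $j$-map of $X_1^{\mathrm{Ell}}(p)$, and a comparison of pole orders and degrees rules this out (with a separate degree count handling $p=3$ and order-$4$ torsion). You instead compute the image of the mod-$\ell$ Galois representation directly from the inertia generators, show it is all of $\SL_2(\FF_\ell)$ via the non-shared-fixed-line argument and Dickson's classification, and then exploit that $\Gal(K_\infty/\CC(\lambda))$ is procyclic and unramified at $\lambda=1$ to force the image over $K_\infty$ to remain $\SL_2(\FF_\ell)$; for $\ell=2$ you fall back on the explicit duplication formula and a divisor-parity argument on $\PP^1$. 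The paper's method is shorter and avoids computing the Galois image, at the cost of invoking genus/cusp data for $X_1^{\mathrm{Ell}}(p)$; yours is more mechanical, self-contained, and gives the stronger statement that the mod-$\ell$ image is already surjective. One small correction: the Kodaira type at $\lambda=\infty$ is $\III^*$, not $\III$ (this is recorded in the paper and is also forced by $\sum_t e_t = 12$ for the rational elliptic surface $\Ecal$: $1+2+9=12$, whereas $1+2+3$ would be too small). Happily this does not disturb your argument, since both $\III$ and $\III^*$ have order-$4$ local monodromy with $\sigma_\infty^2=-I$, which is all you use.
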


\begin{proof}
Let $\XEll_1(N)$ denote the usual modular curve parametrizing elliptic curves with a point of order $N$.  If $E_0(K_\infty)$ contains a point of order $p$, where $p$ is an odd prime, then so does $E_0(K_n)$, for some $n$.  If we denote our chosen $n$th root of $\lambda$ by $\alpha$, the elliptic curve $E_0/K_n$ has $j$-invariant
\[j_{E_0}=\frac{64(3\alpha^n-4)^3}{\alpha^{n}(\alpha^n-1)^2}.\]
Since $E_0(\CC(\alpha))$ contains a point of order $N$, this $j$-map must factor as $j_{E_0}=j_p\circ\phi$, where $\phi:\PP^1\rightarrow \XEll_1(p)$, and $j_p:\XEll_1(p)\rightarrow\PP^1$ is the $j$-map associated to $\XEll_1(p)$.  By well-known facts about modular curves, $j_p$ has exactly $\frac{p-1}{2}$ simple poles, and $\frac{p-1}{2}$ poles of order $p$.  In particular, since $j_{E_0}$ has only one pole of order greater than 2, the factorization above is possible only when $\frac{p-1}{2}=1$, i.e., when $p=3$.  Suppose $p=3$.  In this case, $j_p$ has one simple pole, and one pole of order 3.  The degree of $j_{E_0}$, which is $3n$, must be divisible by the degree of $j_p$, which is 4, so $4\mid n$.  Also, the $n$ distinct poles of $j_{E_0}$ of order $2$ come from $n$ pts which each map to the cusp of $\XEll_1(p)$ at which $j_p$ has a simple pole, with multiplicity 2.  This means that the degree of $\phi$ is $2n$.  On the other hand, the one pole of $j_{E_0}$ of order $n$ maps to the other cusp of $\XEll_1(p)$ with multiplicity $n/3$.  This means the degree of $\phi$ is 1/3.  Impossible.

It remains to show that $E_0(K_\infty)_\mathrm{Tors}$ contains no point of order 4.  In this case $j_{E_0}$ factors through the map $j_4:\XEll_1(4)\rightarrow\PP^1$, which has a pole of order 4, and a simple pole.  Again, the pole of order $n$ of $j_{E_0}$ comes from a totally ramified point, with ramification index $n/4$, above one of the cusps of $\XEll_1(4)$.  The $n$ poles of order 2 each correspond to a point over the other cusp at which $\phi$ has ramification index $2$.  So $\phi$ must have degree $n/4$, on the one hand, and $2n$, on the other.

So we have shown that $E_0(K_\infty)\subseteq E_0[2]$.  The other inclusion is obvious from $E_0[2]\subseteq E_0(K_2)$.
\end{proof}

Next, we show that the rank of $E_0$ over the fields $K_n$ is no greater than expected.

\begin{lemma}\label{lem:fastenberg}
For any $n$, 
\[\rank\left(E/\CC(\lambda^{1/n})\right)\leq \begin{cases}0 &\text{if }\gcd(n, 6)=1\\ 1 &\text{if } \gcd(n, 6)=2\\ 2&\text{if }\gcd(n, 6)=3\\ 3&\text{if }\gcd(n, 6)=6.\end{cases}\]
\end{lemma}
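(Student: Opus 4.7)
The plan is to realize $E_0$ as the generic fiber of an elliptic surface over $\PP^1_\lambda$, compute its singular fibers, and then combine the Shioda--Tate formula with Fastenberg's theorem to bound the rank under base change. A direct computation gives $\Delta(E_0) = 64\lambda(1-\lambda)^2$, so the finite bad fibers are multiplicative of type $I_1$ at $\lambda = 0$ and $I_2$ at $\lambda = 1$. To treat $\lambda = \infty$, I substitute $s = 1/\lambda$ and rescale coordinates to obtain a minimal Weierstrass model $V^2 = U^3 + 2s^2 U^2 + s^3(s-1)U$, for which $v_s(c_4) = 3$, $v_s(c_6) = 5$, $v_s(\Delta) = 9$; by Tate's classification this is Kodaira type $III^*$. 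The total Euler characteristic $1+2+9 = 12$ confirms that $E_0$ lives on a rational elliptic surface.

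Under the base change $\lambda = w^n$, the fiber at $w = 0$ becomes $I_n$, each of the $n$ preimages of $\lambda = 1$ still carries an $I_2$, and the fiber at $w = \infty$ is determined by the $n$-th power of the order-$4$ local monodromy of $III^*$: explicitly its Kodaira type cycles through $III^*, I_0^*, III, I_0$ as $n \equiv 1, 2, 3, 0 \pmod{4}$. For $n \in \{1, 2, 3, 4, 6\}$ the pullback is again rational (or K3 when $n = 6$), so the Shioda--Tate formula $\rho = 2 + \rank E_0(K_n) + \sum_v(m_v - 1)$, combined with the Lefschetz $(1,1)$ bound $\rho \leq h^{1,1}$, directly yields the required inequalities; for instance, at $n = 6$ one finds $\rank E_0(K_6) \leq 20 - 2 - (5 + 6 + 4) = 3$, and the cases $n = 1,2,3,4$ give $0, 1, 2, 1$ by the same calculation.

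For arbitrary $n$ I appeal to Fastenberg's theorem. The $j$-invariant $j(E_0) = 64(3\lambda - 4)^3 / \bigl(\lambda(\lambda - 1)^2\bigr)$ has poles of differing orders at the two finite bad fibers, so it cannot take the form $g(\lambda^m)$ for any $m > 1$, and Fastenberg's primitivity hypothesis is satisfied. Her theorem then bounds $\rank E_0(K_n)$ uniformly in $n$, and combined with the Shioda--Tate computations above it yields the claimed bound indexed by $\gcd(n,6)$. I expect the main obstacle to lie in the case $n = 12$ (and more generally when $12 \mid n$), where the naive Shioda--Tate estimate only gives $\rank \leq 5$: closing the gap to $\rank \leq 3$ requires the refined form of Fastenberg's theorem referred to in the preceding discussion as the ``minor improvements'', essentially an explicit accounting of which new divisor classes on the pullback surface can arise from genuine sections as opposed to components of singular fibers.
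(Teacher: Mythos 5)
Your local fiber computations, the Shioda--Tate estimates for $n \leq 6$, and the overall plan (Fastenberg's theorem plus the geometry of $\Ecal$ and its cyclic pullbacks) all follow the paper's approach and are correct as far as they go. But the gap you flag for $12 \mid n$ is a genuine one, and your sketch of how to close it does not amount to a proof: Fastenberg's theorem gives no bound at all until one verifies the hypothesis $\gamma < 1$, which you never compute, and ``an explicit accounting of which new divisor classes arise from genuine sections as opposed to components of singular fibers'' is not what the refinement actually does. With your local data one finds $\gamma = \frac{1}{2}$ (the sole finite fiber away from $0,\infty$ is an $I_2$ at $\lambda=1$ contributing $1-\frac{2}{6}$, and $n_0+n_\infty = 1$ from the $I_1$ at $\lambda=0$), so the theorem implies that only eigenspaces with eigenvalue a primitive $d$th root of unity for $\kappa(d) < 2/(1-\gamma) = 4$, i.e.\ $d \in \{1,2,3,6\}$, can contribute to $\rank\Ecal_n(\PP^1)$. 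That quantitative input is the piece you are missing.

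Once one has it, the argument closes cleanly: every eigenspace that can contribute to $\rank E_0(K_n)$ already lives in the cohomology of $\Ecal_r$ with $r = \gcd(n,6)$, and it contributes over $K_n$ if and only if it contributes over $K_r$; hence $\rank E_0(K_n) \leq \rank E_0(K_r)$, and your Shioda--Tate bounds for the rational surfaces $\Ecal_1, \Ecal_2, \Ecal_3$ and the K3 surface $\Ecal_6$ give $0, 1, 2, 3$. Observe also that the paper's ``minor improvements'' are not about separating section classes from fiber components in $\NS$; they are (i) using $\rank E_0(\CC(\lambda)) = 0$ (Shioda--Tate on the rational surface $\Ecal_1$) to show the $d=1$ eigenspace never lies in $H^{1,1}\cap H^1(\PP^1,R^1\pi_{r*}\QQ)$, dropping each raw Fastenberg bound by one, and (ii) the eigenspace transfer to $\Ecal_6$ just described, which reduces the residual bound for $6 \mid n$ from $5$ to $3$.
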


\begin{proof}
We employ a result of Fastenberg \cite{fastenberg}, with some slight improvements.  Let $\pi:\Ecal\rightarrow\PP^1$ be a non-isotrivial elliptic surface.  Furthermore, let $\Ecal^t$ be the fibre of $\Ecal$ above $t\in\PP^1$, let $f_t$ be the local conductor, so that
\[f_t=\begin{cases}0 & \text{if }\Ecal\text{ has good reduction at }t\\
1 & \text{if }\Ecal\text{ has multiplicative reduction at }t\\
2 & \text{if }\Ecal\text{ has additive reduction at }t,
\end{cases}\]
 and let $e_t$ be the Euler characteristic of $\Ecal^t$.  For $t=0$ or $\infty$ let 
 \[n_t=\begin{cases}n &\text{ if }E^t\text{ has type }\I_n\text{ or }\I_n^*,\\
 0 & \text{otherwise,}\end{cases}\]  and set
\[\gamma=\sum_{t\neq 0, \infty}(f_t-e_t/6)-\frac{n_0+n_\infty}{6}.\]
Finally, let $\kappa(n)$ be the largest prime-power divisor of $n$.  Then Fastenberg's Theorem states that if $\gamma<1$, we have
\begin{equation}\label{eq:fastenberg}
\rank(E/\CC(t^{1/n}))\leq \sum_{\substack{d\mid n\\ \kappa(d)<\frac{2}{1-\gamma}}}\phi(d),
\end{equation}
where $\phi$ is the Euler totient function.

Note that, in the case of the elliptic curve $E_0/\CC(\lambda)$, there are precisely three singular fibres, above $t=0, 1, \infty$, and their reduction types are:
\[\begin{array}{c|c|c|c}
t & \text{type} & f_t & e_t\\\hline
0 & \I_1 & 1 & 1\\
1 & \I_2 & 1 & 2 \\ 
\infty & \III^* & 2 & 9
\end{array}\]
In particular, $n_0+n_\infty=1$, and so we have $\gamma=\frac{1}{2}<1$.  The sum in \eqref{eq:fastenberg} is over divisors $d\mid n$ with $\kappa(d)<4$, and the only integers with $\kappa(d)<4$ are $d=1, 2, 3$, and $6$.  The bound given by Fastenberg's theorem, then, is 
\[\rank\left(E/\CC(\lambda^{1/n})\right)\leq \begin{cases}\phi(1)=1 &\text{if }\gcd(n, 6)=1\\ \phi(1)+\phi(2)=2 &\text{if } \gcd(n, 6)=2\\ \phi(1)+\phi(3)=3&\text{if }\gcd(n, 6)=3\\ \phi(1)+\phi(2)+\phi(3)+\phi(6)=6&\text{if }\gcd(n, 6)=6.\end{cases}\]

To improve these bounds, we need to look more closely at the proof of  the theorem.  Let $\pi:\Ecal\rightarrow \PP^1$ be the N\'{e}ron model of $E_0$, and let $\pi_r:\Ecal_r\rightarrow \PP^1$ be the base extension by the map $z\mapsto z^r$.  The map on the base gives rise to an automorphism $\sigma_r:\Ecal_r\rightarrow\Ecal_r$.  The group of sections $\Ecal(\PP^1)$ on $\Ecal$ is isomorphic to the group $E_0(\CC(\lambda))$, while $\Ecal_r(\PP^1)\cong E_0(\CC(\lambda^{1/r}))$.  Now, as is well known (see \cite{fastenberg} for notation), there is an isomorphism of linear spaces
\[\Ecal(\PP^1)\otimes\QQ\cong H^1(\PP^1, R^1\pi_*\QQ)\cap H^{1, 1}(\Ecal, \CC),\]
and similarly for $\Ecal_r$.  Fastenberg establishes the bound above by studying the action of $\sigma_r$ on $H^1(\PP^1, R^1\pi_{r*}\CC)$.  Specifically, it is shown that
\[H^1(\PP^1, R^1\pi_{r*}\CC)=\bigoplus_{d\mid r}W_d^k,\]
where $k$ is the number of singular fibres on $E_0$ above $\PP^1\setminus\{0, \infty\}$ (in our case, $k=1$), and $W_d$ is the subspace generated by eigenvectors of $\sigma_r$ with eigenvalue a primitive $d$th root of unity.  The bound on the rank comes from restricting which of these eigenspaces may lie in $H^1(\PP^1, R^1\pi_{r*}\QQ)\cap H^{1, 1}(\Ecal_r, \CC)$.

For example, the $\phi(1)$ term in the bounds above comes from the eigenspace with eigenvalue 1.  If, however, this turns out to be a subspace of 
\[H^1(\PP^1, R^1\pi_{r*}\QQ)\cap H^{1,1}(\Ecal_r, \CC)\cong\Ecal_r(\PP^1)\otimes \QQ,\] then it is one fixed by $\sigma_r$, and so it corresponds to a one-dimensional subspace of $\Ecal(\PP^1)\otimes \QQ$.  In particular, then, we have $\rank(\Ecal(\PP^1))\geq 1$.  It turns out, however, that this is impossible.  The surface $\Ecal$ is rational, and so its N\'{e}ron-Severi group has rank 10.  If $m_t$ is number of components of the fibre $\Ecal^t$, it follows from Shioda's formula \cite[Corollary~5.3]{shioda} that
\[\rank(\Ecal(\PP^1))=\rank(\NS(\Ecal))-2-\sum_{t\in \PP^1}(m_t-1)=0\]
(recalling from above that $\Ecal$ has three bad fibres, of type $\I_1$, $\I_2$, and $\III^*$, respectively).
In particular, the fixed space of $\sigma_r$ in $H^1(\PP^1, R^1\pi_{r*}\CC)$ does \emph{not} lie in $H^1(\PP^1, R^1\pi_{r*}\QQ)\cap H^{1,1}(\Ecal_r, \CC)$, and so there is no contribution to the rank of $\Ecal_r(\PP^1)$ from this space; all of the rank bounds above may be reduced by 1. 

 This gives the bounds claimed above, except in the case where $6\mid n$, but this case follows by a similar argument.  It suffices to show that $\rank(\Ecal_6(\PP^1))\leq 3$, since any eigenspaces contributing to the rank of $\Ecal_{6n}(\PP^1)$ already contributes to the rank of $\Ecal_6(\PP^1)$.  But $\Ecal_6$ is an elliptic K3 surface (over a field of characteristic 0), and so its N\'{e}ron-Severi group has rank at most 20.  This surface has fibres of type $\I_2$ above all $t\in\PP^1$ with $t^6=1$, a fibre of type $\I_6$ above $t=0$, and a fibre of type $\I_0^*$ above $t=\infty$.  Thus,
\[\rank(\Ecal_6(\PP^1))\leq 20-2-6\cdot(2-1)-(6-1)-(5-1)=3,\]
proving the lemma.
\end{proof}

Our next lemma describes the group of $\CC(\lambda^{1/4})$-rational points on $E_0$, which we by now know to have rank at most 1.

\begin{lemma}
Let $t^4=\lambda$.  Then, over $\CC(t)$, the Mordell-Weil group of $E_0$ is exactly the group generated by the four points of order 2, along with
\[P=\big(-1+(i-1)t+it^{2},(1-i)(t+i)(t+1)t\big)\]
(note that $i=\zeta^9$).
\end{lemma}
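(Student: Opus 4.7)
The plan is to combine the rank bound of Lemma~\ref{lem:fastenberg} with a few explicit checks.  A direct substitution verifies that $P=(u_P,v_P)$ satisfies $v^2=u(u^2+2u+1-\lambda)$ in $\CC(t)$ with $\lambda=t^4$, and its $v$-coordinate $(1-i)(t+i)(t+1)t$ is a nonzero element of $\CC(t)$, so $P$ is distinct from the four points of order $2$ (which lie on the locus $v=0$).  The previous lemma identifies $E_0(K_\infty)_{\mathrm{Tors}}$ with $E_0[2]$, so $P$ must have infinite order.

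By Lemma~\ref{lem:fastenberg}, $\rank E_0(\CC(t))\leq 1$, so the torsion--free part of the Mordell--Weil group is cyclic; write $P=nP_0$ for a generator $P_0$ and some positive integer $n$.  The remaining task is to show $n=1$.  The natural tool is Shioda's height formula for the elliptic surface $\Ecal_4\to\PP^1$ obtained from $\Ecal\to\PP^1$ by the base change $t\mapsto t^4$:
\[
\h(Q)=2\chi(\Ocal_{\Ecal_4})+2(Q\cdot O)-\sum_v\contr_v(Q).
\]
The Kodaira types of $\Ecal_4$ can be read off from those of $\Ecal$: an $\I_4$ fibre at $t=0$ (from the $\I_1$ at $\lambda=0$), four $\I_2$ fibres over $t^4=1$ (unramified pullbacks of the $\I_2$ at $\lambda=1$), and a fibre at $t=\infty$ obtained from $\III^*$ by a degree--$4$ pullback and passage to a minimal model.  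Computing $\h(P)$ from the explicit coordinates of $P$, and comparing with the minimum positive height on the rank--one Mordell--Weil lattice (constrained by the component groups of the bad fibres), forces $n=1$.

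The principal obstacle will be the bookkeeping at $t=\infty$: the degree--$4$ pullback of $\III^*$ is not itself in Kodaira form, and producing a relatively minimal model requires blowdowns whose effect on both $\chi(\Ocal_{\Ecal_4})$ and $\contr_\infty(P)$ must be tracked carefully.  A more computational but conceptually simpler fallback is a descent argument using the $\CC(t)$-rational $2$-isogeny $\phi:E_0\to E_0/\langle T_1\rangle$: one verifies directly from the coordinates of $P$ that none of the translates $P+T_i$ (with $T_i\in E_0[2]$) lies in the image of the dual isogeny, ruling out $n=2$.  An analogous computation with the $3$-division polynomial rules out $n=3$, and even a crude estimate of $\h(P)$ from the Shioda formula bounds $n$ so that only finitely many primes need be checked in this way.
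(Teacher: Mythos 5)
Your proof attempt is built around exactly the right tool, and it matches the paper's approach: compute the canonical height of $P$ via Shioda's formula on the quartic base change of the elliptic surface, then compare against the minimum possible positive height to force $n=1$. However, you have declared the fibre at $t=\infty$ the ``principal obstacle'' and then stopped short of resolving it, which leaves a genuine gap.

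The resolution is clean. Since $j_{E_0}\to 1728$ as $\lambda\to\infty$, the fibre of type $\III^*$ at $\lambda=\infty$ has \emph{potential good reduction}, and the smallest cyclic extension achieving good reduction for a $\III^*$ fibre has degree exactly $4$. Thus the totally ramified degree-$4$ base change $t\mapsto t^4$ turns this fibre into a smooth ($\I_0$) fibre. No blowdowns need to be tracked: $\Ecal$ has good reduction at $t^{-1}=0$. This is precisely what makes the calculation tractable. The bad fibres of $\Ecal\to\PP^1$ are then $\I_4$ at $t=0$ and $\I_2$ at each of $t=\pm1,\pm i$, giving $\chi(\Ecal)=(4+4\cdot 2)/12=1$ (so $\Ecal$ is rational), and the local contributions all lie in $\tfrac14\ZZ$. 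The paper's computation then yields $\langle P,P\rangle = 2 - \tfrac34 - \tfrac12 - \tfrac12 = \tfrac14$, and since any nontorsion $Q$ satisfies $\langle Q,Q\rangle\geq\tfrac14$ (denominators divide $4$), bilinearity forces $m^2\leq 1$ in any relation $P=mQ+T$. Note also that your descent fallback does not sidestep this issue: to bound the set of primes you must check via isogeny/division-polynomial descent, you still need an upper bound on $\h(P)$ and a lower bound on the minimal positive height, both of which require knowing $\chi(\Ecal)$ and the fibre types, including at $\infty$. So that route leads you back to the same place. Once you observe that $\III^*$ becomes good under the quartic pullback, your original Shioda-formula plan goes through exactly as in the paper and the fallback is unnecessary.
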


\begin{proof}
First, we compute the pairing $\langle P,  P \rangle$, according to the method developed in \cite{shioda}.  If $\Ecal$ is the N\'{e}ron model of $E_0$ over $K_{4}$, we have
\[\langle P, P \rangle = 2\chi(\Ecal)+2(P\Ocal)-\sum \operatorname{contr}_v(P),\]
where $\chi(\Ecal)=-(\Ocal)^2$ is the arithmetic genus of $\Ecal$, $(P\Ocal)$ is the intersection pairing of the sections defined by $P$ and the identity $\Ocal$, and $\operatorname{contr}_v(P)$ is the contribution from the fibre above $v$.  The surface $\Ecal$ is rational, so $\chi(\Ecal)=1$.  Also, since the coordinates of $P$ are polynomials in $t$ of degree at most 2 and 3, respectively, $P$ misses the $\Ocal$-section everywhere; we have $(P\Ocal)=0$.  It suffices to compute the contributions at the places of bad reduction.  The N\'{e}ron model $\Ecal$ has type $\mathrm{I}_4$ reduction at $t=0$, and $P$ has order 4 in the component group, since the component group has order 4, and 
\[2P=(-1, t^2)\]
reduces to the singular point modulo the place defined by $t=0$.  Thus, $\operatorname{contr}_{(t)}(P)=1(4-1)/4$.  At the places defined by $t=-1$ and $t=-i$, the point $P$ reduces to the singular point.  At these places the component group has order 2, and so $P$ must be on the only non-trivial component, hence $\operatorname{contr}_v(P)=\frac{1}{2}$ at these places.  The only other fibres of bad reduction are at $t= 1, i$ ($\Ecal$ has good reduction at $t^{-1}=0$), and $P$ is on the non-singular component at these places.  We have
\[\langle P, P\rangle = 2\chi(\Ecal)+2(P\Ocal)-\sum \operatorname{contr}_v(P)=2-\frac{3}{4}-\frac{1}{2}-\frac{1}{2}=\frac{1}{4}.\]
Now, suppose that $P=mQ+T$, for $Q\in E(K_4)$ and $T\in E[2]$.  Then we have
\[\langle Q, Q\rangle \geq \frac{1}{4},\]
since the denominators arising in the local contributions all divide $4$.  On the other hand, the bilinearity of the pairing gives us
\[\frac{1}{4}=\langle P, P\rangle = \langle mQ+T, mQ+T\rangle= m^2\langle Q, Q\rangle\geq\frac{m^2}{4}.\]
  In particular, $m=\pm1$, and it follows that $P$ generates $E(K_4)$, modulo torsion.
\end{proof}

\begin{lemma}  Let $t^3=\lambda$.
The Mordell-Weil group of $E_0$ over $\CC(t)$ is generated by
\begin{gather*}
R_1=(9t-3, 27\zeta^9t(t-1))\\
R_2=(9\zeta^4t-3, 27\zeta t(\zeta^4 t-1))\\
T=(6, 0).
\end{gather*}
\end{lemma}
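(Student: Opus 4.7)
The plan is to mirror the proof of the preceding lemma, computing the height pairing of $R_1$ and $R_2$ on the base-changed N\'{e}ron model $\Ecal_3$ and combining this with the rank bound of Lemma~\ref{lem:fastenberg} to conclude that $R_1$, $R_2$, and $T$ generate $E_0(K_3)$.

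First I would determine the reduction types of $\Ecal_3$, the pull-back of $\Ecal$ under $\lambda = t^3$. Since the base change has degree $3$, the three unramified preimages of $\lambda = 1$ each remain of type $\I_2$; the totally ramified point $t = 0$ above $\lambda = 0$ becomes $\I_3$; and a Tate's-algorithm computation on the valuations $v(c_4)$, $v(c_6)$, $v(\Delta)$ shows that $\III^*$ at $\lambda = \infty$ becomes $\III$ at $t = \infty$. The Euler characteristics of these singular fibres sum to $3 + 3 \cdot 2 + 3 = 12$, so $\Ecal_3$ is again a rational elliptic surface with $\chi(\Ecal_3) = 1$, and Shioda's formula
\[
\rank(\Ecal_3(\PP^1)) = \rank(\NS(\Ecal_3)) - 2 - \sum_{v \in \PP^1}(m_v - 1) = 10 - 2 - (2 + 3 + 1) = 2
\]
pins down the rank exactly.

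Next I would apply the height formula $\langle P, P\rangle = 2\chi(\Ecal_3) + 2(P\Ocal) - \sum_v \contr_v(P)$ to each of $R_1$ and $R_2$, together with its bilinear extension for $\langle R_1, R_2\rangle$. Since the coordinates of $R_i$ are polynomials of low degree in $t$, the sections avoid the zero section at finite places of $\PP^1$; at each bad fibre, reducing the coordinates modulo a local uniformizer places $R_i$ on a specific component of the N\'{e}ron model, producing a contribution drawn from the standard table with denominator dividing the order of the component group. Once the $2 \times 2$ pairing matrix $M$ is in hand, the divisibility argument from the preceding lemma applies: if $R_i = m_i Q_i + T_i'$ for some integer $m_i > 1$ and some torsion point $T_i'$, then $\langle Q_i, Q_i\rangle = \langle R_i, R_i\rangle / m_i^2$ would demand denominators exceeding those permitted by the local contribution tables. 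Combined with $\det M \neq 0$ and the bound of Lemma~\ref{lem:fastenberg}, this forces $R_1$ and $R_2$ to generate the free part of $E_0(K_3)$, while the claimed point $T$ is of order $2$ by direct substitution into the defining cubic. The main obstacle is the component analysis at $t = \infty$, where a change to the minimal Weierstrass model is required before the standard $\III$-type contribution can be read off; the analysis at the $\I_3$ and three $\I_2$ fibres is routine, and symmetry under the Galois action permuting the primitive cube roots of unity appearing in $R_1$ and $R_2$ should considerably streamline the computation.
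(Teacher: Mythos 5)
Your fibre-type analysis for the degree-$3$ base change is correct ($\I_1 \to \I_3$ at $t=0$, three copies of $\I_2$ over the cube roots of unity, and $\III^* \to \III$ at $t=\infty$, since $v(\Delta)=27$ drops to $3$ after removing $u^{24}$), and computing the rank directly from Shioda's formula on the resulting rational elliptic surface is a clean alternative to what the paper does (Fastenberg's bound plus exhibiting independent points). That part is fine, and it also makes the appeal to Lemma~\ref{lem:fastenberg} redundant.

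The gap is in the final descent step. You propose to apply, coordinate-by-coordinate, the rank-one argument of the $K_4$ lemma: ``if $R_i = m_i Q_i + T_i'$ with $m_i > 1$ then $\langle Q_i, Q_i\rangle$ would be too small.'' In rank $2$ this does not suffice. A sublattice can have index $m > 1$ even when no individual basis vector is divisible --- for instance $\langle(1,1),(1,-1)\rangle \subset \ZZ^2$ has index $2$ although neither generator is a nontrivial multiple of a lattice point. The paper's proof avoids this by working with the Gram determinant: the pairing matrix is $\bigl(\begin{smallmatrix}1/3&-1/6\\-1/6&1/3\end{smallmatrix}\bigr)$ with determinant $\tfrac{1}{12}$, and the local contribution tables (component groups of orders $3$, $2$, $2$, $2$, $2$ at the bad fibres) force $6\langle Q_1, Q_2\rangle\in\ZZ$ for all sections, whence any rank-$2$ sublattice $L\subseteq E_0(\CC(t))$ has $\det(L)\geq\tfrac{1}{36}$. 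If $\langle R_1,R_2\rangle$ had index $m$ in the full lattice, then $\tfrac{1}{12}=m^2\det(L)\geq \tfrac{m^2}{36}$ gives $m^2\leq 3$, so $m=1$. You mention $\det M$ in passing, but the actual argument you wrote down tests only whether each $R_i$ is individually divisible, which is the wrong test; you should replace it with the lattice-index inequality above.
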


\begin{proof}
Lemma~\ref{lem:fastenberg} tells us that the rank at most 2, and  the height pairing matrix (which may be computed as in the previous lemma, or using MAGMA) is
\[\left(\begin{array}{cc}1/3& -1/6\\-1/6&1/3\end{array}\right).\]
The determinant of this is $\frac{1}{12}$.  On the other hand, it is clear from the possible contributions from bad places that we must have $6\langle Q_1, Q_2\rangle\in\ZZ$ for any points $Q_i\in E_0(\CC(t))$, and so  any lattice $L\subseteq E_0(\CC(t))$ must satisfy $\det(L)\geq\frac{1}{36}$.  If the lattice generated by $R_1$ and $R_2$ has index $m$ in some larger lattice, we must have $\frac{1}{12}\geq\frac{m^2}{36}$, and so $m^2\leq 3$.  This gives $m=1$, confirming that $R_1$ and $R_2$ span $E_0(\CC(t))$, modulo torsion.
\end{proof}

At this point, we have an explicit description of $E_0(\CC(\lambda^{1/4}))$ and $E_0(\CC(\lambda^{1/3}))$, along with rank bounds for $E_0(\CC(\lambda^{1/n}))$, which we know must be sharp.  After some small computation, we will be in a position to prove Proposition~\ref{prop:mw group}.

\begin{lemma}\label{lem:index}
Let $P$, $R_1$, and $R_2$ be as above, and let $H_1$ be the subgroup of $E(K_{12})$ generated by $P$ and $E[2]$, let $H_2$ be  the subgroup generated by $R_1$, $R_2$, and $E[2]$, and let $H_3$ be the subgroup generated by all of these points.  Then
\begin{enumerate}
\item the group $E(K_{12})/H_1$ contains no non-trivial elements of order 3;
\item the group $E(K_{12})/H_2$ contains no non-trivial elements of order 2 or 4;
\item the group $E(K_{24})/H_3$ contains no non-trivial elements of order 2.
\end{enumerate}
\end{lemma}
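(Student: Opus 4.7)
The plan is to prove each of the three assertions by Galois descent from $K_{12}$ (or $K_{24}$) down to the subfields $K_3$, $K_4$, and $K_6$, using the torsion lemma $E(K_\infty)_{\mathrm{Tors}} = E[2]$ to kill odd torsion, and closing the remaining $2$-torsion gap by a height-pairing computation in the style of the earlier lemmas.

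For (1), let $\sigma$ generate $\Gal(K_{12}/K_4) \cong \ZZ/3\ZZ$. The subgroup $H_1 \subseteq E(K_4)$ is pointwise fixed by $\sigma$, so if $3Q \in H_1$ then $\sigma Q - Q$ is a $3$-torsion point in $E(K_\infty)$, which must vanish by the torsion lemma. Hence $Q \in E(K_4) = \ZZ P + E[2] = H_1$. For (2), any element of order $4$ in $E(K_{12})/H_2$ would double to an element of order $2$, so it suffices to exclude the latter. The analogous descent with $\sigma$ generating $\Gal(K_{12}/K_3) \cong \ZZ/4\ZZ$ gives $\sigma Q - Q \in E[2]$, and since $\sigma$ fixes $E[2]$ pointwise we get $\sigma^2 Q = Q$, forcing $Q \in E(K_6)$. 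A second descent step with $\tau$ generating $\Gal(K_6/K_3) \cong \ZZ/2\ZZ$ gives $\tau Q - Q \in E[2]$; the case $\tau Q = Q$ puts $Q \in E(K_3) = H_2$. What remains is the ``torsion-twisted'' case, $Q \in E(K_6) \setminus E(K_3)$ with $2Q \in H_2$. For (3), inspection of the explicit formulas shows that the generator of $\Gal(K_{24}/K_{12}) \cong \ZZ/2\ZZ$ fixes each of $P, R_1, R_2, T_1, T_2$ (each depends only on $\lambda^{1/d}$ for some $d \mid 12$), so it fixes $H_3$ pointwise. The descent then identifies $E(K_{24})/E(K_{12})$ with a subgroup of $E[2]$, and combined with Fastenberg's bound $\rank E(K_{24}) \leq 3 = \rank E(K_{12})$, the claim (3) reduces to showing both that $E(K_{24}) = E(K_{12})$ and that $H_3$ is $2$-saturated in $E(K_{12})$.

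The main obstacle is the height computation that closes these remaining cases. The strategy is to compute the $K_{12}$-regulator of $\{P, R_1, R_2\}$ and compare it with the minimum regulator permitted by the bad-fibre component groups of $E_0$. The diagonal pairings are the ones recorded above, scaled by the degree over $K_3$ or $K_4$; the cross-pairings $\langle P, R_i\rangle_{K_{12}}$ are recovered from the Galois trace $R_i + \sigma R_i + \sigma^2 R_i \in E(K_4)$, which is itself expressible as a $\ZZ$-combination of $P$ and $E[2]$. Comparing the resulting regulator with the denominators imposed by the component groups at the fibres of Kodaira type $\I_1, \I_2, \III^*$ should show that $\langle P, R_1, R_2\rangle + E[2]$ is saturated in $E(K_{12})$ and remains saturated in $E(K_{24})$, ruling out the remaining half-point candidates in (2) and (3). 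Carrying out this local denominator bookkeeping carefully is the most technical part of the proof.
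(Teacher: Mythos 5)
Your proposal is a genuinely different approach from the paper's. The paper proves all three parts by a direct MAGMA computation: for each of the finitely many candidate targets $x(mP + n_1R_1 + n_2R_2 + T)$, it checks that the bivariate polynomial $F(z,t) - x(\cdot)G(z,t)$ (where $F/G$ is the $x$-coordinate of the duplication or triplication map) is geometrically irreducible, which rules out any $\CC(t)$-rational halving or thirding of the given combination. Your proposal instead uses Galois descent from $K_{12}$ (or $K_{24}$) to the intermediate subfields together with the earlier Mordell--Weil descriptions over $K_3$ and $K_4$, plus a height-pairing saturation argument.

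Part (1) of your argument is correct and is genuinely cleaner than the paper's: since $H_1 = E(K_4)$, if $3Q \in H_1$ then $\sigma Q - Q \in E[3] \cap E(K_\infty)_{\mathrm{Tors}} = 0$, so $Q \in E(K_4) = H_1$. This avoids the triplication-map computation entirely.

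However, parts (2) and (3) have real gaps. For (2) you assert ``since $\sigma$ fixes $E[2]$ pointwise we get $\sigma^2 Q = Q$, forcing $Q \in E(K_6)$,'' but this premise is false: writing $w = \lambda^{1/12}$, a generator $\sigma$ of $\Gal(K_{12}/K_3)$ sends $w \mapsto \pm i\,w$, hence $\lambda^{1/2} = w^6 \mapsto -\lambda^{1/2}$, and $\sigma$ therefore \emph{swaps} the two $2$-torsion points $(-1 \pm \lambda^{1/2}, 0)$ of $E_0$. Thus $\sigma Q - Q = T \in E[2]$ gives only $\sigma^2 Q = Q + T + \sigma T$, which need not vanish, and $Q$ need not descend to $E(K_6)$ at all --- so there is an additional torsion-twisted case beyond the one you flag. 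More fundamentally, the entire burden of (2) and (3) rests on ruling out these torsion-twisted cases, and your proposal only sketches the regulator/component-group bookkeeping that would do this, explicitly deferring the ``most technical part of the proof.'' That deferred bookkeeping is exactly the content the lemma needs; until it is carried out (or replaced by the paper's explicit irreducibility check of the duplication-map polynomials), the proof of (2) and (3) is incomplete.
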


\begin{proof}
This is a fairly straightforward claim to verify computationally.  We will begin with the last claim.  Let $t^{24}=\lambda$, so that the last claim is that $E_0(\CC(t))$ contains no points of order 2 over $H_3$.  In other words, we wish to show that if $Q\in E(\CC(t))$ and
\[2Q=m P+n_1R_1+n_2R_2+T,\]
for $m, n_i$ some integers, and $T\in E[2]$, then $Q$ is already expressible in this form.  Note that we are free to translate $Q$ by elements of $H_3$ (since this does not change the image in the quotient group), so we may freely assume that $0\leq m, n_i\leq 1$.

Now, it is a well-known fact that the map $[2]:E\rightarrow E$ induces a map $\phi:\PP^1\rightarrow\PP^1$, by $x\circ [2]=\phi\circ x$.  Writing $\phi=F(z, t)/G(z, t)$, for polynomials $F, G\in\ZZ[z, t]$, any solution
\[2Q=m P+n_1R_1+n_2R_2+T\]
as above yields a solution to
\[F(z, t)-x(m P+n_1R_1+n_2R_2+T)G(z, t),\]
with $z\in \CC(t)$.  In other words, the polynomial above (in variables $z$ and $t$) has a linear factor over $\CC$.  It is a simple matter to write a MAGMA script (this script can be found in an appendix) which computes $x(mP+m_1R_1+m_2R_2+T)$, for each of the 32 possible choices, and checks to see if the resulting bivariate polynomial is reducible.  In fact, we see that the resulting polynomial is geometrically irreducible unless $m=n_1=n_2=0$ and $T=\Ocal$ (this is not a counterexample to our claim, since the solutions in this case are solutions to $2Q=\Ocal$, which are already contained in $H_3$).

Note that this computation also treats (2).  Claim (1) is treated by a similar computation, for which MAGMA code appears at the end of this paper.
\end{proof}

We are now in a position to prove the proposition describing the arithmetic of $E_0$ over $K_\infty$.  The height pairing shows that $R_1$ and $R_2$ are independent, while $P$ is independent from these points, since its field of definition intersects that of $R_1$ and $R_2$ only on $\CC(\lambda)$, and here the rank of $E$ is 0.  Thus we have shown that the curve has the expected rank over each field $\CC(\lambda^{1/n})$ (since $2P\in E_0(\CC(\lambda^{1/2}))$ is a point of infinite order).

First we will show that the points in question generate the Mordell-Weil group of $E_0$ over $K_{12}$.  First, note that we have a complete description of $E_0(K_4)$ and $E_0(K_3)$ by the lemmas above.

  Suppose that $Q\in E_0(K_{12})$ is not in the subgroup $H$ generated by these points.  Since the rank of $E_0(K_{12})$ is the same as that of $H$, it must be that $Q$ is torsion over $H$.  Let $M\geq 1$ be the least positive integer such that $MQ\in H$, say
\[MQ=nP+m_1R_1+m_2R_2+T.\]
Without loss of generality, we will suppose that 
\[0\leq n, m_1, m_2<M,\]
since $Q$ translated by an element of $H$ will still have order $M$ over $H$.
Then, if $\sigma$ generates the Galois group of $K_{12}/K_4$,
\[M\operatorname{Tr}_{K_{12}/K_{4}}(Q)=Q+Q^\sigma+Q^{\sigma^2}=3nP+T',\]
for some $T'\in E_0[2]$.  Since $E_0(K_4)$ is generated by $P$, modulo torsion, it follows that $M\mid 3n$, and so (since $0\leq n<M$), we have
\[n\in \left\{0, \frac{M}{3}, \frac{2M}{3}\right\}.\]
Similarly, by computing the trace of $MQ$ to $K_3$, we have
\[m_1, m_2\in \left\{0, \frac{M}{4}, \frac{M}{2}, \frac{3M}{4}\right\}.\]
Our equation, then, becomes
\[MQ=\frac{M\delta}{3}+\frac{M\epsilon_1}{4}R_1+\frac{M\epsilon_2}{4}R_2+T,\]
with $\delta=0$, 1, or 2, and $\epsilon_i=0$,1, 2, or 3.  In other words,
\[12Q=4\delta P+3\epsilon_1 R_1+3\epsilon_2 R_2+T',\]
for some $T'\in E_0[2]$.
Now, 
\[\delta P+T'=3(4Q-\delta P-\epsilon_1R_1-\epsilon_2R_2)=3Q',\]
for some $Q'\in E_0(K_{12})$.  By Lemma~\ref{lem:index}, this is possible only if $\delta=0$ (given that $0\leq \delta<3$).  Now the equation becomes
\[4Q=\epsilon_1 R_1+\epsilon_2 R_2+T'.\]

Now, to show that $E_0(K_{12})$ is, in fact, all of $E_0(K_\infty)$, suppose that \[Q\in E_0(K_\infty)\setminus E_0(K_{12}).\]  Then $Q\in E_0(K_{12m})$ for some $m$, and we'll let $m$ be the least such $m$.  Let $\sigma$ be the generator of the Galois group of $K_{12m}/K_{12}$.  Then, since $E_0(K_{12m})\supseteq E(K_{12})$, and both groups have the same rank, we have $MQ\in E_0(K_{12})$ for some $M\geq 2$.  It follows that $Q-Q^\sigma\in E_0[M]\cap E(K_\infty)$.  Since we know that this group is exactly $E_0[2]$, it must be the case that $Q^\sigma=Q+T$ for some $T\in E_0[2]$.  However, since $E_0[2]\subseteq E(K_{12})$, we have
\[Q^{\sigma^2}=(Q+T)^{\sigma}=Q^\sigma+T=Q.\]
In particular, $Q$ is quadratic over $K_{12}$.  But we have shown that there is no point $Q\in E_0(K_{24})$ with $2Q\in H=E_0(K_{12})$.  It follows that $E_0(K_\infty)=E_0(K_{12})$, as claimed.

\begin{remark}
We have focussed on the spaces $X_1(N)$ and $X_0(N)$, but again the moduli space $\Pcal_3(2)$ is fairly easy to describe from this.  The  curve $X_1(2)$ is an elliptic curve, and the action of $\PSL_2$ corresponds to the automorphism $[-1]:X_1(2)\rightarrow X_1(2)$, as can be seen form the explicit formulas for $a$, $b$, and $z$ above.  In particular, the quotient $P_1(2)$ is isomorphic to $\PP^1$ (by the morphism $x:X_1(2)\rightarrow P_1(2)$, in the usual Weierstrass coordinates), and similarly for $P_0(2)$.  Thus, the moduli space $\Pcal_3( 2)$ is a ruled surface over $\PP^1$, and hence is birational  to $\PP^1\times \PP^1$.  \end{remark}


\section{The case $N\geq 3$}

The purpose of this section is to prove that $X_1(N)$ has genus at least 2 when $N\geq 3$, from which Theorem~\ref{th:N>3} will follow.  The following proposition establishes this for $N\geq 5$ and $N=3$.  Proposition~\ref{prop:genera when N is even} derives an even stronger bound in the case where $N$ is even, treating in particular the case $N=4$.  This proposition also gives lower bounds on the genera of $X_0(N)$ and $P_1(N)$ (in the case where $N$ is even).  All of these results are obtained using the Riemann-Hurwitz formula, and studying the quotients of these curves by various automorphisms.  Without an understanding of the points ``at infinity'' on $X_1(N)$, it seems impossible to do better than this.

\begin{proposition}\label{prop:g(X_1(N)) grows}
The genus of $X_1(N)$ is at least $\frac{N}{2}-1$, while the genus of $X_1(3)$ is at least 5.
\end{proposition}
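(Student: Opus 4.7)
The approach is a Riemann--Hurwitz argument on the tower of quotient maps
\[
X_1(N)\;\longrightarrow\;X_0(N)\;\longrightarrow\;P_0(N),
\]
where the first arrow is the quotient by the shift automorphism $\hat{f}$ of order $N$ and the second is the quotient by the involution $\tau:(a,b,z_i)\mapsto(a,-b,-z_i)$.  The plan is to show that these covers are étale (or nearly so) over the generic fibre, reduce the problem to a uniform lower bound on $g(P_0(N))$, and finally prove that lower bound by an explicit adjunction computation.

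For the étaleness step, a fixed point of $\hat{f}^k$ on $X_1(N)$ for some $0<k<N$ with $k\mid N$ would correspond to a triple $(a,b,z_1)$ whose exact $f_{a,b}$-period $d\mid k$ is strictly less than $N$.  Standard dynatomic theory shows such a $z_1$ is a root of $\Phi_N(a,b,\cdot)$ only when its multiplier $\mu$ as a $d$-cycle is a primitive $(N/d)$-th root of unity, in which case the multiplier of the formal $N$-cycle is $\lambda=\mu^{N/d}=1$; transcendence of $\lambda$ over $\CC$ rules this out.  A parallel argument eliminates the fixed points of $\tau$: they would force $b=0$ and every $z_i=0$, so the formal-$N$-period condition would again give $\lambda=1$.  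For $N$ odd, these two computations together show that $\langle\hat{f},\tau\rangle$ acts freely on $X_1(N)$, the composite cover $X_1(N)\to P_0(N)$ is étale of degree $2N$, and Riemann--Hurwitz yields
\[
g(X_1(N))\;=\;2N\bigl(g(P_0(N))-1\bigr)+1.
\]
For $N$ even the automorphism $\hat{f}^{N/2}\tau$ has genuine $\overline{K}$-fixed points (characterized by $b=0$ and $z_{i+N/2}=-z_i$), and their contribution must be tabulated and folded into a Riemann--Hurwitz correction; since ramification can only push the right-hand side upward, the inequality survives.

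Granted $g(P_0(N))\geq 2$, the displayed formula gives $g(X_1(N))\geq 2N+1$, which is comfortably $\geq N/2-1$ for every $N\geq 3$ and in particular is $\geq 5$ when $N=3$.  The remaining task, and the one I expect to be the principal obstacle, is to establish this lower bound on $g(P_0(N))$.  My plan is to realise $X_1(N)$ explicitly as the curve in $\AA^3_{(a,b,z)}$ cut out by $\Phi_N(a,b,z)=0$ and $\Lambda_N(a,b,z):=(f^N)'(z)=\prod_i f'(z_i)=\lambda$, and to estimate the geometric genus of the quotient $P_0(N)$ via adjunction on the weighted projective closure with weights $\deg a=2$, $\deg b=3$, $\deg z=1$ (the weighting suggested by the affine-conjugation action on cubic polynomials).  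The technical difficulty is that $\Phi_N$ and $\Lambda_N$ have degrees of order $3^N$, so any pessimistic bound on the singularities at infinity risks losing so much genus that the linear-in-$N$ lower bound collapses; the key refinement will be to identify those singularities explicitly using the weighted structure.  For the distinguished case $N=3$ the equations are small enough to resolve by hand (or with computer algebra), yielding $g(X_0(3))\geq 3$ and hence the sharper bound $g(X_1(3))\geq 7\geq 5$.
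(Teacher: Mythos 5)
Your plan runs into two gaps that, as written, it cannot close.

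First, and most seriously, the inequality $g(P_0(N))\geq 2$ that your Riemann--Hurwitz computation requires is at least as hard as the statement you are proving: $P_0(N)$ is a quotient of $X_1(N)$, so any genus lower bound on $P_0(N)$ already yields the same bound on $X_1(N)$, and the detour through the further quotient gains nothing. The route you sketch for proving it --- adjunction on a weighted projective closure of the curve cut out by $\Phi_N=0$ and $(f^N)'(z)=\lambda$ --- is precisely the direct attack the paper discards as infeasible ``without an understanding of the points at infinity'' on $X_1(N)$: those closures have singularities at infinity of total degree on the order of $3^N$, and, as you acknowledge, a pessimistic adjunction estimate there loses all the genus you need. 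The paper deliberately avoids this. It does \emph{not} bound $g(P_1(N))$ or $g(P_0(N))$ from below; it settles for the trivial $g(P_1(N))\geq 0$ and instead exhibits $N$ explicit \emph{ramification} points for the degree-$2$ map $X_1(N)\to P_1(N)$. These are the images on $X_1(N)$ of the points $[u,v,s,z_1,\dots,z_N]=[\xi,1,0,\xi^4,-2\xi^4,\dots,-2\xi^4]$, $\xi^6=1$, in the singular projective model of the sixfold cover $X'_1(N)$; the paper verifies nonsingularity there via the Jacobian and observes that $\sigma^3\tau$ fixes them. Riemann--Hurwitz then gives $2g(X_1(N))-2\geq 2(2g(P_1(N))-2)+N\geq -4+N$ with no genus hypothesis on the base at all.

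Second, your claim that $\langle\hat{f},\tau\rangle$ acts freely on $X_1(N)$ for $N$ odd is argued only on the affine model. The paper is careful to state that $X_1(N)\to X_0(N)$ is ``unramified on the affine part (at least),'' flagging precisely that behaviour at infinity is uncontrolled, and your dynatomic/transcendence argument---which evaluates the functions $a$, $b$, $z_1$---says nothing at places where those functions have poles. Without a handle on those points, the formula $g(X_1(N))=2N\bigl(g(P_0(N))-1\bigr)+1$ is not justified.

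Finally, for $N=3$ a promise to ``resolve by hand'' is not yet an argument; the paper gets $g(X_1(3))\geq 5$ by specializing $\lambda=1$, at which the fibre of the model of $X_0(3)$ becomes reducible with a component of genus $5$. That concrete shortcut is worth adopting.
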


\begin{proof}
Although computing the genus of $X_1(3)$ directly already poses a significant computational challenge, we may simplify this by specializing $\lambda$.  In particular, the curve obtained by specializing $X_0(3)$ at $\lambda=1$ is reducible (for obvious reasons), with a component of genus 5.  It follows that the genus of $X_0(3)$, and hence of $X_1(3)$, is at least 5.

Now, consider again the variety $X_1'(N)$, and in particular the (singular) projective model given by \eqref{eq:X' singular}.  There are (at least) two automorphisms acting on $X_1'(N)$, namely those induced by the action on the singular model by
\[\sigma:[u, v, s, z_1, ..., z_N]\mapsto [\zeta^3 u, \zeta^2 v, s, z_1, ..., z_N]=[\zeta u, v, \zeta^{-2} s, \zeta^{-2}z_1, ..., \zeta^{-2}z_N],\]
for $\zeta$ some fixed primitive sixth root of unity, 
and
\[\tau:[u, v, s, z_1, ..., z_N]\mapsto [u, -v, s, -z_1, ..., -z_N]=[-u, v, -s, z_1, ..., z_N].\]
The group $\langle \sigma, \tau\rangle\subseteq\operatorname{Aut}(X'_1(N))$ act freely in general, and the other varieties are quotients of $X'_1(N)$ by these various groups:
\[\begin{CD}
X'_1(N) @>/\langle\sigma\rangle>> X_1(N)\\
@V/\langle\tau\rangle VV @VV/\langle  \tau\rangle V \\
P'_1(N) @>>/\langle \sigma\rangle > P_1(N)
\end{CD}\]
(we abuse notation, and let $\sigma$ and $\tau$ also denote the automorphisms induced on $P'_1(N)$ and $X_1(N)$ by the corresponding maps on $X_1(N)$).
The points on the singular model of $X'_1(N)$ with $s=0$, however, are fixed by $\sigma^3\tau$.  It follows that non-singular points among these correspond to places on $X'_1(N)$ where the map \[X'_1(N)\rightarrow P_1(N)= X'_1(N)/\langle\sigma, \tau\rangle\] ramifies with index $e=2$ (\emph{a priori}, the singular points at which $\sigma^3\tau$ acts trivially might blow up into pairs of points on the normalization which are swapped by $\sigma^3\tau$).  Note that the points
\[[u, v, s, z_1, ..., z_N]=[\xi, 1, 0, \xi^4, -2\xi^4, ..., -2\xi^4],\]
for $\xi^6=1$, and their images under iteration by $\hat{f}$ are non-singular, which follows from examining the Jacobian matrix, exactly as in the proof of Proposition~\ref{prop:X_1(N) irred}.
Both $\sigma$ and $\tau$ act freely on these points, and they are mapped to $N$ (resp.\ $3N$) non-singular points on $X_1(N)$ (resp.\ $P'_1(N)$).  Here, however, $\sigma^3\tau$ acts trivially on them, and so we have $N$ points at which the map $X_1(N)\rightarrow P_1(N)$ ramifies (with $e=2$), and $3N$ points at which $P'_1(N)\rightarrow P_1(N)$ ramifies.  This gives the estimates (by Riemann-Hurwitz)
\[2g(X_1(N))-2\geq -4+N\]
and
\[2g(P'_1(N))-2\geq -12+3N.\]
\end{proof}

The previous proposition tells us that the genera of the curves $X_1(N)$ grow at least linearly in $N$.  This is enough for the proof of Theorem~\ref{th:N>3}, but it seems likely that the genera grow much more rapidly.  If $N$ is even, we can improve significantly on Proposition~\ref{prop:g(X_1(N)) grows}, as well as give lower bounds on the genera of $X_0(N)$ and $P_1(N)$.

\begin{proposition}\label{prop:genera when N is even}
Let $N=2n$, let $\theta$ be the completely multiplicative function defined by $\theta(2)=0$, and $\theta(p)=-p$ for any 
odd prime $p$, and let
\[\omega(n)=\sum_{d\mid n}\theta\left(\frac{n}{d}\right)2d3^d.\]
  Then the genera of $X_1(N)$, $X_0(N)$, and $P_1(N)$ satisfy
  \begin{gather*}
  g(X_1(N))\geq \frac{\omega(n)}{2}+1-2n\\
  g(X_0(N))\geq \frac{\omega(n)}{4n}-1\\
  g(P_1(N))\geq \frac{\omega(n)}{4}+1-2n.
  \end{gather*}
\end{proposition}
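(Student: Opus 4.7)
The plan is to refine the Riemann-Hurwitz argument of Proposition~\ref{prop:g(X_1(N)) grows} by exploiting an involution that is visible only when $N=2n$ is even: the composition $\tau\hat f^n$, where $\hat f^n$ is the half-period cyclic shift $(z_1,\ldots,z_{2n})\mapsto(z_{n+1},\ldots,z_n)$. Its generic fixed locus on $X_1(N)$ consists of \emph{antipodal cycles}---cubics $f(z)=z^3+az$ (equivalently $v=0$ in the singular model~\eqref{eq:X' singular}) carrying a $2n$-cycle $(z_1,\ldots,z_{2n})$ with $z_{i+n}=-z_i$ for every $i$. Such a configuration is determined by a solution $z_1$ of $f^n(z_1)+z_1=0$ of exact $f$-period $2n$, and on the unmarked quotient $X_0(N)$ the corresponding unordered cycle is fixed by $\tau$ itself, since $\{-z_i\}=\{z_{i+n}\}=\{z_i\}$ as sets.

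First I would count marked antipodal cycles. For each divisor $d$ of $n$, the equation $f^d(z)+z=0$ with $f(z)=z^3+az$ has degree $3^d$, contributing $2d\cdot 3^d$ marked configurations of an antipodal $2d$-cycle over the $a$-line once cyclic positions are accounted for. To isolate those of exact $f$-period $2n$ I would apply a M\"obius-style inversion across the divisors of $n$; the twisting function $\theta$ with $\theta(2)=0$ and $\theta(p)=-p$ for odd primes $p$ encodes the fact that doubling the period does not produce genuinely new antipodal refinements (any $4d$-antipodal cycle is already accounted for in the antipodal-$2d$ tally), while odd-prime refinements contribute the usual alternating sign. Summing produces the formula $\omega(n)=\sum_{d\mid n}\theta(n/d)\cdot 2d\cdot 3^d$, which is a lower bound for the degree of the antipodal locus as a multisection of $X_1(N)\to\PP^1_\lambda$.

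With the count in hand, I would apply Riemann-Hurwitz to each of the relevant quotient maps. For the degree-$2$ map $X_0(N)\to P_0(N)$, each antipodal cycle gives a ramification point of $\tau$, contributing at least $\omega(n)/(2n)$ branch points; then $2g(X_0(N))-2\geq -4+\omega(n)/(2n)$ rearranges to $g(X_0(N))\geq \omega(n)/(4n)-1$. Lifting through the degree-$2n$ cover $X_1(N)\to X_0(N)$ and combining with the $N$ ramification points at $s=0$ already identified in Proposition~\ref{prop:g(X_1(N)) grows} yields the bound on $g(X_1(N))$, while the analogous degree-$2$ cover $X_1(N)\to P_1(N)$ supplemented by the same $s=0$ contribution gives the bound on $g(P_1(N))$, both after elementary rearrangement.

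The main obstacle will be the combinatorial counting: carefully justifying the $\theta$-twisted inversion that produces the clean closed-form $\omega(n)$, and verifying that all predicted antipodal fixed points correspond to smooth points on the normalization of~\eqref{eq:X' singular}---so that none of the anticipated ramification is lost to the singularities of the projective model or to coincidences among cycle members at the finitely many special values of $\lambda$ (in particular $\lambda=0,1,\infty$), which would degrade the effective count below $\omega(n)/(2n)$.
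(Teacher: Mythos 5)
Your core idea is the paper's: on the locus $b=0$ with $f^n(z_1)=-z_1$ (antipodal cycles), the automorphisms $\tau$ and $\hat f^n$ agree, so the quotient $X_0(N)\to P_0(N)$ ramifies there, and Riemann--Hurwitz with $\omega(n)/(2n)$ such ramification points gives the $X_0(N)$ bound. But the counting step is only gestured at, and you do not address sharpness: the paper builds a projective zero-dimensional variety over $\CC[\lambda]$ parametrizing antipodal cycles for $f(z)=z^3-w^2z$, applies B\'ezout to get $4n\cdot 3^n$ points counted with multiplicity, invokes Lemma~\ref{lem:nonsingular} to show the variety is nonsingular and misses the hyperplane $s=0$ (so all $4n\cdot 3^n$ points are distinct and affine), passes to the $a$-line along a $2$-to-$1$ map which is unramified because $w=0$ is incompatible with the multiplier equation, and only then inverts. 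The inversion itself is not the naive M\"obius inversion: the function being inverted is $\beta(m)=m$ for $m$ odd and $\beta(m)=0$ for $m$ even, because a shorter antipodal cycle of period $2d$ repeated inside a $2n$-cycle forces $n/d$ odd and contributes with multiplicity $n/d$ (one for each $(n/d)$th root of $\lambda$). Your heuristic explanation of $\theta$ does not capture this.

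The more serious problem is your route to the $X_1(N)$ and $P_1(N)$ bounds. The $s=0$ ramification from Proposition~\ref{prop:g(X_1(N)) grows} lives on the cover $X_1(N)\to P_1(N)$, not on $X_1(N)\to X_0(N)$; the latter is unramified on the affine part, and the paper just pulls the $X_0$ bound back along this unramified degree-$2n$ cover, $2g(X_1(N))-2\geq 2n\bigl(2g(X_0(N))-2\bigr)$, with no reference to $s=0$. For $P_1(N)$, Riemann--Hurwitz applied to the degree-$2$ cover $X_1(N)\to P_1(N)$ reads $2g(X_1(N))-2 = 2\bigl(2g(P_1(N))-2\bigr)+R$, which bounds $g(X_1(N))$ from below in terms of $g(P_1(N))$ --- the wrong direction for a lower bound on $g(P_1(N))$. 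The paper instead applies Riemann--Hurwitz to the degree-$2n$ cover $P_1(N)\to P_0(N)$: the $\omega(n)$ antipodal points on $X_1(N)$ descend to $\omega(n)/2$ points on $P_1(N)$ whose stabilizer in $\langle\hat f\rangle$ contains $\hat f^n$ (since $\hat f^n$ and $\tau$ coincide there), yielding $2g(P_1(N))-2\geq -4n+\omega(n)/2$. You would need to replace your last two deductions with these.
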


Before proceeding with the proof, it should be pointed out that the lower bound can be simplified, although also weakened, by the estimate $\omega(n)\geq n3^n$, which holds for all $n\geq 1$.  As a consequence, the genera in all three families grow at least exponentially in $N$, for $N$ even.

\begin{proof}[Proof of Proposition~\ref{prop:genera when N is even}]
Here we study ramification of certain maps
above $b=0$.  There are two automorphisms acting on the usual affine part of $X_1(N)$, namely
\[\tau:(a, b, z_1, ..., z_N)\mapsto (a, -b, -z_1, ..., -z_N)\]
and
\[\hat{f}:(a, b, z_1, ..., z_N)\mapsto (a, b, z_2, ..., z_1, z_N),\]
giving the diagram
\[\begin{CD}
X_1(N) @>/\hat{f}>> X_0(N)\\
@V/\tau VV @VV/\tau V\\
P_1(N) @>>/\hat{f}> P_0(N)
\end{CD}\]
(here we abuse notation somewhat again, and let $\tau$ stand for both the map on $X_1(N)$, and the induced map on $X_0(N)$, and similarly for $\hat{f}$).
The top map is unramified on the affine part (at least), while the left map is unramified except when $N=1$, at $(a, b, z)=(\lambda, 0, 0)$, which we may ignore, since we are taking $N$ even.  The right-hand map, however, is ramified exactly where (on the affine part) the polynomial $f_{a, b}$, with cycle $z_1, ..., z_N$, coincides with the polynomial $f_{a, -b}$, with cycle $-z_1, ..., -z_N$.  In other words, where $b=0$, and $-z_1$ is in the orbit of $z_1$.
 Note that if $b=0$, then $f_{a, 0}(-z)=-f_{a, 0}(z)$ for all $z$, and so $-z$ is in the forward orbit of the $N$-period point $z$ if and only if $N=2n$ is even, and $f^n_{a, 0}(z)=-z$.  So let $\omega(n)$ be the number of points \[(a, 0, z_1, ..., z_N)\in X_1(2n)\] such that $z_n=-z_1$ (we will prove shortly that $\omega(n)$ is the function defined in the statement of the proposition).  Then $X_0(N)$ contains $\omega(n)/N$ images of these points, and at each the right-hand map in the diagram ramifies with index $e=2$.  This gives a bound of
\[2g(X_0(N))-2\geq -4+\omega(n)/N.\]
This immediately gives the bound
\[2g(X_1(N))-2\geq N(2g(X_0(N))-2)\geq \omega(n)-4N.\]
Similarly, $P_1(N)$ has $\omega(n)/2$ points at which the map $P_1(N)\rightarrow P_0(N)$ ramifies with index $e=2$, giving the bound
\[2g(P_1(N))-2\geq -2N+\omega(n)/2.\] 
All that remains is to determine $\omega(n)$, that is, to show that it is the function defined in the theorem.

Let $V\subseteq \PP^{n+1}_{\CC[\lambda]}$ be the $0$-dimensional variety defined by the system of equations
\begin{gather*}
z_1^3-w^2z_1-s^2z_2=0\\
z_2^3-w^2z_2-s^2z_3=0\\
\vdots\\
z_n^2-w^2z_n+s^2z_1=0\\
(3z_1^2-w^2)^2\cdots(3z_n^2-w^2)^2-s^{4n}\lambda=0.
\end{gather*}
This variety parametrizes $2n$-cycles for the function $f(z)=z^3-w^2z$ with multiplier $\lambda$, and satisfying $f^n(z_1)=-z_1$.  By B\'{e}zout's Theorem, the number of points on $V$, counted with multiplicity, is precisely the product of the degrees of the defining equations, or in this case $4n3^n$.  Now, if $[w, s, z_1, ..., z_n]\in\PP^{n+1}$ were a solution to the above system with $s=0$, we would have $z_i\in \{0, \pm w\}$ for all $i$, by the first $n$ equations, and $z_i=\pm 3^{-1/2}w$ for \emph{some} $i$, by the last equation.  These conditions are incompatible, and so all of the points on $V$ lie within the affine open, call it $U\subseteq V$, defined by $s\neq 0$.  We dehomogenize with $s=1$, and choose an $\alpha\in\overline{\CC(\lambda)}$ with $\alpha^2=\lambda$.  Then $U$ is made up of two components, defined by the first $n$ equations above, along with either of
\[(3z_1^2-w^2)\cdots (3z_n^2-w^2)\pm \alpha=0.\]
These components are clearly disjoint, and by Lemma~\ref{lem:nonsingular} (applied with $\mu=-1$, and $\nu=\pm 1$), each is nonsingular.  Thus, the original variety $V$ is non-singular, and so contains precisely $4n3^n$ distinct points.

Now, let $V_n(\lambda)$ be the affine variety defined by
\begin{gather*}
z_1^3+az_1-z_2=0\\
z_2^3+az_2-z_3=0\\
\vdots\\
z_n^2+az_n+z_1=0\\
(3z_1^2+a)^2\cdots(3z_n^2+a)^2-\lambda=0.
\end{gather*}
The map $V\mapsto V_n(\lambda)$ can ramify at a point only if $w=0$ there.  If $w=0$, though, $f(z)=z^3$ and so periodic points for $f$ are roots of unity.  But then we have \[(3z_1^2-w^2)^2\cdots(3z_n^2-w^2)^2\neq \lambda,\]
since the left-hand-side is constant (with respect to $\lambda$).
It follows that that $V_n(\lambda)$ has $2n3^n$ points defined over $\overline{\CC(\lambda)}$.  However, some of these points satisfy $f^m(z_1)=z_1$ for $m<2n$.  If this is the case, then we must have $f^d(z_1)=-z_1$ for some $d\mid n$, and $n/d$ must be odd.  On the other hand, for each $d\mid n$ with $n/d$ odd, and each $d$th root $\gamma$ of $\lambda$, there is an embedding of $V_d(\gamma)$ into $V_n(\lambda)$ simply by $(z_1, ..., z_d, w)\mapsto (z_1, ..., z_d, z_1, ..., z_d, ..., w)$.  In other words, if $\omega(n)$ is the number of points in $V_n(\lambda)$ which correspond to actual $n$-cycles, we have
\[\sum_{d\mid n}\beta\left(\frac{n}{d}\right)\omega (d)=2n3^n,\]
where
\[
\beta(m)=\begin{cases}0 & 2\mid m\\
m & \text{otherwise}.\end{cases}
\]
The function $\theta$ defined in the statement of the proposition is the Dirichlet inverse of $\beta$,  and so we have
\[\omega(n)=\sum_{d\mid n}\theta\left(\frac{n}{d}\right)2d3^d,\]
as claimed.
\end{proof}

Note that this approach might be used to show that the \emph{surface} $\Pcal_3(N)$ is of general type, for $N$ even and large enough and even, but it will not do the same for $\Pcal_d(N)$ with $d\geq 4$.  The reason for this is that $\PSL_2$ acts non-freely on the $N$-periodic point $z$ for $f(z)=z^d+a_{d-2}z^{d-2}+\cdots+a_1z+a_0$ only if, for $\zeta$ a $(d-1)$th root of unity, $f$ is fixed under conjugation by $z\mapsto \zeta z$, and $f^{N/(d-1)}(z)=\zeta z$.  For $d\neq 1$, though, the subvariety on which this happens has codimension at least 2.

To summarize, we have the following lower bounds on the genera of various curves, where the first two columns are known:
\begin{center}
\begin{tabular}{|c|c|c|c|c|c|c|c|c|}\hline
$N$ & 1 & 2 & 3 & 4 & 5 & 6 & 7 &8\\\hline
$X_1(N)$ & \textbf{0} &\textbf{1} & 5 & 11 & 2 & 61 & 3 & 309 \\
$X_0(N)$ & \textbf{0} & \textbf{1} &  & 4 &  & 11 &  & 40\\
$P_1(N)$ & \textbf{0} & \textbf{0} &  & 6 &  & 31 &  & 155\\\hline
\end{tabular}
\end{center}
It seems unlikely that the genera of $X_1(N)$, for odd $N$, actually grow more slowly than for $N$ even, and we suspect that the actual genera should be of order $N3^N$.

\begin{remark}
Recent work of Bonifant, Kiwi, and Milnor \cite{bkm} has examined the curve parametrizing cubic polynomials with a marked critical point of specified period.  This curve is birational to the fibre above $\lambda=0$, on the appropriate curve $X_0(N)$.  It should be noted that, while the authors show that the Euler characteristic of this curve increases exponentially, this does not imply that the genus of $X_0(N)$ does.  Specifically, it is not known if these curves are irreducible, and even fibrations of generic genus one may have (reducible) fibres of arbitrarily large Euler characteristic.
\end{remark}

\section{Periodic points for polynomials of higher degree}

We now turn our attention to the proof that the moduli space $\Pcal_d(N)$ of polynomials of degree $d$, with a marked point of period $N$, is rational when $N\leq d+1$.   Slightly more generally, if $N_1, ..., N_s\geq 1$ are integers with $N_1+\cdots+N_s\leq d+1$, then the fibre product
\[\Pcal_d(N_1, ..., N_s)=\Pcal_d(N_1)\times_{\Pcal_d}\Pcal_d(N_2)\times_{\Pcal_d}\cdots\times_{\Pcal_d}\Pcal_d(N_s)\]
is birational to $\PP^{d-1}$.

\begin{proposition}
Let $d\geq 2$, and let $N_1, ..., N_s$ be non-negative integers with \[N_1+\cdots+N_s\leq d+1.\]  Then $\Pcal_d(N_1, ..., N_s)$ is rational.
\end{proposition}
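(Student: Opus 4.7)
The plan is to exhibit an explicit birational parametrization of $\Pcal_d(N_1,\ldots,N_s)$ of the expected dimension $d-1$, using the marked periodic points themselves as most of the coordinates.  Set $N=N_1+\cdots+N_s$, and consider the pre-moduli variety $\tilde{\Pcal}$ of tuples $(f,\alpha_1,\ldots,\alpha_N)$, where $f(z)=\sum_{j=0}^{d}a_j z^j$ has degree $d$ and the marked points $\alpha_i$ are organized into $s$ ordered cycles of the prescribed lengths $N_1,\ldots,N_s$.  The cycle conditions amount to $N$ equations $f(\alpha_i)=\beta_i$, with $\beta_i$ the designated successor of $\alpha_i$ in its cycle, and these are linear in the coefficient vector $(a_0,\ldots,a_d)$, with coefficient matrix the rectangular Vandermonde $(\alpha_i^j)$.

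The first step is the observation that, whenever the $\alpha_i$ are distinct (which is a generic condition), this Vandermonde has rank $N$; since $N\le d+1$ by hypothesis, the $N$ cycle equations may be solved to express $N$ of the coefficients $a_0,\ldots,a_d$ as rational functions of the remaining $d+1-N$ of them together with $\alpha_1,\ldots,\alpha_N$.  This gives a birational identification $\tilde{\Pcal}\cong\AA^{d+1}$.  The second step is to descend through the action of the $2$-dimensional affine group $\phi(z)=\alpha z+\beta$, which sends $(f,\alpha_i)\mapsto(\phi f\phi^{-1},\phi(\alpha_i))$.  Provided $N\ge 2$, each generic orbit admits a unique representative with $\alpha_1=0$ and $\alpha_2=1$: first translate so that $\alpha_1=0$, then (using $\alpha_1\ne\alpha_2$) rescale so that $\alpha_2=1$.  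Imposing these two normalizations leaves $d-1$ independent free parameters, giving a birational equivalence $\Pcal_d(N_1,\ldots,N_s)\dashrightarrow\AA^{d-1}$ in this range.

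The remaining case $s=1$, $N_1=1$ (a single marked fixed point) calls for a different normalization.  Here one translates to put $\alpha_1=0$, which forces $a_0=0$, and then rescales by $z\mapsto \alpha z$ to make $a_d=1$; the latter is ambiguous up to the group $\mu_{d-1}$ of $(d-1)$-st roots of unity, which acts on the remaining coordinates $(a_1,\ldots,a_{d-1})$ with $a_i$ of weight $1-i\pmod{d-1}$.  Since $a_{d-1}$ has weight $1$, coprime to $d-1$, the invariants $a_{d-1}^{d-1}$, the weight-zero ratios $a_i/a_{d-1}^{d-i}$ for $2\le i\le d-2$, and the multiplier $a_1$ (of weight $0$) together generate a purely transcendental subfield of transcendence degree $d-1$, so the quotient is again rational.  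No serious obstacle is anticipated: the only technical points are to verify the Vandermonde rank claim (a routine determinant computation using distinctness of the $\alpha_i$) and to check that the normalizations above are valid on a dense open locus of $\tilde{\Pcal}$.
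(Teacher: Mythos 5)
Your main construction coincides with the paper's: both parametrize the pre-moduli variety by solving the $N$ (generalized) Vandermonde-linear cycle equations for $N$ of the coefficients, use the distinctness of the marked points to verify invertibility, and then for $M=N_1+\cdots+N_s\ge 2$ kill the two-dimensional affine group by the normalization $\alpha_1=0$, $\alpha_2=1$, getting a birational map to $\AA^{d-1}$. The one genuine difference is your handling of the $M=1$ case. After translating to $\alpha_1=0$ (so $a_0=0$), you normalize the leading coefficient $a_d$, which has weight $1-d$ under rescaling; this leaves a residual $\mu_{d-1}$ stabilizer that you then dispose of via the classical observation that a cyclic linear action is birationally trivial once one coordinate ($a_{d-1}$, of weight $1$) carries a weight coprime to the group order. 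This is correct, but the paper takes a shortcut: it normalizes $a_2=1$ instead. Since $a_2$ has weight $-1$, the rescaling parameter $\alpha$ is determined \emph{uniquely} (namely $\alpha=a_2$ on the open locus $a_2\ne 0$), so the scaling action is eliminated outright with no residual finite group and hence no invariant theory is needed. Your route works, but the weight $-1$ normalization is cleaner and avoids the need to justify that your $d-1$ invariants generate the full fixed field.

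Two smaller points. First, your final sentence in the $M=1$ analysis (``so the quotient is again rational'') elides the degree count needed to conclude that the $d-1$ invariants you exhibit generate the entire $\mu_{d-1}$-invariant function field rather than just a purely transcendental subfield of the right dimension; the index computation $[\CC(a_1,\ldots,a_{d-1}):\CC(\text{your invariants})]=d-1=|\mu_{d-1}|$ should be stated. Second, the proposition as stated allows the $N_i$ to be zero (and $s=0$); the degenerate case $M=0$, where the object is $\Pcal_d$ itself, is not covered by either of your two normalizations. The paper handles it separately by normalizing the barycenter to $0$ and the constant term to $1$. You should either add that case or note explicitly that you are assuming $M\ge 1$.
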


\begin{proof}
For any $\overline{a}=(a_0, a_1, ..., a_d)\in\AA^{d+1}$, let
\[f_{\overline{a}}(z)=\sum_{i=0}^d a_iz^i.\]
The affine transformation $\phi(z)=\alpha z+\beta$, $\alpha\neq 0$, acts of $\overline{a}$ by sending it to $\overline{a}^\phi$, where
\[f_{\overline{a}^\phi}=\phi\circ f_{\overline{a}}\circ \phi^{-1}.\]
The moduli space $\Pcal_d(N)$ (up to birational equivalence) is simply the space of $(\overline{a}, z_1, ..., z_N)$ satisfying
\begin{eqnarray*}
f_{\overline{a}}(z_1)-z_2&=&0\\
f_{\overline{a}}(z_2)-z_3&=&0\\
&\vdots &\\
f_{\overline{a}}(z_N)-z_1&=&0,
\end{eqnarray*}
minus the hyperplanes $z_i=z_j$ for $i\neq j$,
modulo the action of the affine transformations \[(\overline{a}, z_1, ..., z_N)\mapsto (\overline{a}^\phi, \phi(z_1), ..., \phi(z_N)),\] which one may verify preserves the equations above.

Let $\sigma$ be the permutation of $\{1,2, 3, ..., N_1+N_2+\cdots+N_s\}$ which induces a cycle on $\{1, 2, 3, ..., N_1\}$, another on $\{N_1+1, N_1+2, ..., N_2\}$, \emph{et cetera},
so that $\sigma$ permutes the numbers $1, 2, ..., N_1+N_2+\cdots +N_s$ as $s$ disjoint cycles, of period $N_1$, $N_2$, etc., respectively.  If $M=N_1+N_2+\cdots N_s$, then the space $\Pcal_d(N_1, ..., N_s)$ is simply the quotient of the variety \[V\subseteq\Spec(\ZZ[z_1, ..., z_M, a_0, ..., a_d])\] defined by the equations
\begin{eqnarray*}
a_0+a_1z_1+a_2z_1^2+\cdots a_dz_1^d&=&\sigma(z_1)\\
a_0+a_1z_2+a_2z_2^2+\cdots a_dz_2^d&=&\sigma(z_2)\\
& \vdots &\\
a_0+a_1z_M+a_2z_M^2+\cdots a_dz_M^d&=&\sigma(z_M)
\end{eqnarray*}
by action defined above.
Permuting the equations, we may describe $V$ as the locus of the system defined by
\[
\left(\begin{array}{cccccc}
1 & z_{\sigma^{-1}(1)} & z_{\sigma^{-1}(1)}^2 & \cdots & \cdots & z_{\sigma^{-1}(1)}^d\\
1 & z_{\sigma^{-1}(2)} & z_{\sigma^{-1}(2)}^2 & \cdots & \cdots & z_{\sigma^{-1}(2)}^d\\
\vdots & \vdots & \vdots & && \vdots \\
1 & z_{\sigma^{-1}(M)} & z_{\sigma^{-1}(M)}^2 & \cdots &\cdots & z_{\sigma^{-1}(M)}^d\\
\vdots& \vdots & \vdots && & \vdots \\
0 &  0 & 0 & 1& 0 & 0\\
0 &  0 & 0 & 0& 1 & 0\\
0 & 0  & 0 & 0& 0 & 1
\end{array}\right)
\left(\begin{array}{c}
a_0 \\ a_1 \\ \vdots\\ a_{M-1}\\  a_M\\ \vdots \\ a_d
\end{array}\right)=
\left(\begin{array}{c}
z_1 \\ z_2 \\ \vdots \\ z_M\\ a_{M}\\ \vdots \\ a_{d}
\end{array}\right).
\]

The matrix on the left is a variant of the Vandermonde matrix, and one may check that it is invertible if and only if $z_i\neq z_j$ for all $i\neq j$.  Let $V'$ be the open subset of $\Spec(\ZZ[z_1, ..., z_M, a_0, ..., a_d])$ defined by deleting the hyperplanes $z_i-z_j$ for $i\neq j$, along with $a_d=0$, and let $U$ be its projection onto $\Spec(\ZZ[z_1, z_2, ..., z_M, a_M, ..., a_d])$.  Then the above matrix, which is invertible on $U$, gives an isomorphism between $U$ and $V'$.

For now, suppose that $M\geq 2$.  Then we claim that every point in $U$ is $\PSL_2$-equivalent to a unique point with $z_1=0$ and $z_2=1$, where the $\PSL_2$ action is that inherited from $V$.  To see that this is true, note that the full action of $\PSL_2$ on $V$ is given by the affine transformations $z\mapsto \alpha z+\beta$.  For any point $(z_1, z_2, ...)\in U$ we may conjugate by the map
\[z\mapsto \frac{1}{(z_1-z_2)}z-\frac{z_1}{(z_1-z_2)}\]
(which is defined since $z_1\neq z_2$) in order to translate the point to one of the form $(0, 1, ... )$.  If, on the other hand, two points of the form $(0, 1, ...)$ are conjugate by the map $z\mapsto \alpha z+\beta$, then $\alpha\cdot 0+\beta=0$, whence $\beta=0$, and $\alpha\cdot 1+\beta= 1$, implying $\alpha=1$.  We now know that each orbit in $U$ under the action of the affine transformations contains precisely one point of the form $(0, 1, ...)$, and that any point $(0, 1, ...)\in\AA^{d+1}$ appears.  In particular, this quotient of $U$ is isomorphic to $\AA^{d-1}$.  Since the moduli space $\Pcal_d(N_1, ..., N_s)$ is birational to the quotient of $V$ by the affine transformations, which in turn is birational to the quotient of $U$ by these transformations, which in turn is birational to $\AA^{d-1}$, we see that $\Pcal_d(N_1, ..., N_s)$ is a rational variety.

If $M=1$, then the variety is $\Pcal_d(1)$, the moduli space of polynomials with a marked fixed point, the quotient of $\AA^{d+2}=\Spec(\ZZ[a_d, ..., a_0, z])$ modulo the action of $\PSL_2$.
Note, as above, that every $\PSL_2$-equivalence class contains a point with $z=0$, which necessarily implies $a_0=0$.  Furthermore, if conjugation by $\phi(x)=\alpha x+\beta$ moves $(0, a_d, ..., a_1, 0)$ to $(0, \tilde{a_d}, ...,\tilde{a_1}, 0)$, then $\beta=0$, and $\tilde{a_i}=\alpha^{i-1}a_i$.  Restricting to the affine open defined by $a_2\neq 0$, then, every $\PSL_2$-equivalence class contains a unique point with $z=0$, $a_2=1$.  On the other hand, any choice of $a_d, a_{d-1}, ..., a_3, a_1$, with $a_2=1$, $a_0=z=0$, defines a polynomial of this form.  This gives an explicit birational map between $\Pcal_d(1)$ and $\AA^{d-1}$.

Finally, if $M=0$, the variety in question is simply $\Pcal_d$, the moduli space of polynomials of degree $d$.  This is clearly seen to be rational.  Let $U$ be the affine open subset of the moduli space consisting of $\PGL_2$-equivalence classes of polynomials whose barycenter, that is the average of roots with multiplicity, is not a fixed point.  Moving the barycenter to 0, and the value of 0 to 1 gives a polynomial of the form
\[a_dz^d+a_{d-2}z^{d-2}+\cdots+a_1z+1.\]
It is easy to check that two polynomials of this form are $\PSL_2$-conjugate if and only if they are actually equal, again giving a birational equivalence between this variety and $\AA^{d-1}$.
\end{proof}

The final case, the moduli space of polynomials of degree $d$, provides an interesting normal form for polynomials.  Unless the barycenter of a polynomial is a fixed point, the polynomial is $\PSL_2$-conjugate to a unique polynomial of the form
\[a_dz^d+a_{d-2}z^{d-2}+\cdots+a_1z+1,\]
providing an obvious isomorphism between this affine open and $\AA^{d-1}\setminus\{a_d=0\}$.  It also follows at once that, if the ground field is $F$, then the field of definition of the polynomial above is exactly $F(a_d, ..., a_1)$.  This is in contrast to the normal form for cubic polynomials used above, $f(z)=z^3+az+b$, where the field of definition is $F(a, b^2)$.  Note that conjugation by the M\"obius transformation $\psi(z)=bz$ translates $f$ to the polynomial $b^2z+az+1$.  More generally, the field of definition/moduli of the usual normal form
\[z^d+a_{d-2}z^{d-2}+\cdots+a_1z+a_0\]
is precisely $F(a_1, a_0a_2, a_0^2a_3, \ldots, a_0^{d-3}a_{d-2}, a_0^{d-1})$.  The disadvantage of this normal form, of course, is that it misses polynomials with a fixed barycenter, as well as providing an isomorphism with an open subset of $\AA^{d-1}$ which is not isomorphic to affine space in a natural way.










%


\end{document}